\newtheorem{rk}{Remark}
\newtheorem{ass}{Assumption}
\newtheorem{theo}{Theorem}
\newtheorem{lem}{Lemma}
\newtheorem{definition}{Definition}
\newcommand{\Bcal}{\mathcal{B}}
\newcommand{\Ncal}{\mathcal{N}}
\newcommand{\Scal}{\mathcal{S}}
\newcommand{\Zcal}{\mathcal{Z}}
\newcommand{\Rset}{\mathbb{R}}
\newcommand{\Nset}{\mathbb{N}}
\newcommand{\Trans}{\scriptscriptstyle\top}
\DeclareMathOperator*{\argm}{argmin}
\DeclareMathOperator*{\minimise}{minimise}
\DeclareMathOperator*{\infim}{inf}
\newcommand{\ri}{\operatorname{ri}}
\newcommand{\cl}{\operatorname{cl}}
\title{\LARGE \bf
An Augmented Lagrangian Coordination-Decomposition Algorithm for Solving Distributed Non-Convex Programs
}
\author{Jean-Hubert Hours and Colin N. Jones
\thanks{ Jean-Hubert Hours and Colin N. Jones are with the Laboratoire d'Automatique,~\'Ecole Polytechnique F\'ed\'erale de Lausanne,~Switzerland. 
        {\tt\small \{jean-hubert.hours, colin.jones\}@epfl.ch}}%
}
\begin{document}
\maketitle
\begin{abstract}
A novel augmented Lagrangian method for solving non-convex programs with nonlinear cost and constraint couplings in a distributed framework is presented. The proposed decomposition algorithm is made of two layers: The outer level is a standard multiplier method with penalty on the nonlinear equality constraints, while the inner level consists of a block-coordinate descent (BCD) scheme. Based on standard results on multiplier methods and recent results on proximal regularised BCD techniques, it is proven that the method converges to a KKT point of the non-convex nonlinear program under a semi-algebraicity assumption.~Efficacy of the algorithm is demonstrated on a numerical example. 
\end{abstract}
\section{Introduction}
\label{sec:intro}
When dealing with a large-scale system, such as a power grid for instance, sub-systems are coupled in a complex manner through their dynamics, implying that local control actions may have a major impact throughout the whole network. Implementing NMPC controllers for such systems may result in large-scale non-separable non-convex nonlinear programs (NLP). Solving such programs online in a centralised fashion is a challenging task from a computational point of view. Moreover, autonomy of the agents is likely to be hampered by such a centralised procedure. Therefore, distributed MPC is currently raising much interest, as an advanced control strategy for decentralising computations, thus reducing the computational burden and enabling autonomy of agents, which then only need to solve a local NLP \cite{neco2011}. Potential applications of distributed NMPC abound, from networked systems' control (interconnected power plants) to collaborative control (formation flying).\\
Decomposing a convex NLP is generally performed by applying Lagrangian decomposition methods \cite{bert1997}. A critical requirement for this strategy is strong duality, which is guaranteed by Slater's condition in the convex case. However, such an assumption rarely holds in a non-convex setting. Nevertheless, in an augmented Lagrangian setting, it has been shown that the duality gap can be locally removed by using appropriate penalty functions \cite{rock1974}. Moreover, augmented Lagrangian methods turn out to be much more efficient than standard penalty approaches from a computational point of view, as the risk of  ill-conditioning associated with large penalty parameters is reduced by the fast convergence of the dual iteration. However, the resulting quadratic penalty term is not separable even if the coupling constraints are. Several approaches have been explored to remedy this issue \cite{ham2011}. For instance, in \cite{bert1979,tani1985}, a local convexification procedure by means of proximal regularisation is analysed. A more recent approach to the decomposition of non-convex programs is the sequential programming scheme presented in \cite{neco2009}, which consists in iterative convex approximations and decompositions. Our decomposition strategy is not specifically targeted at separable NLPs, as objective and constraints couplings can be addressed by the proposed BCD scheme. It also differs from \cite{neco2009} in that only local convexification is required to guarantee convergence.\\
\looseness-1Our approach is an augmented Lagrangian method with partial constraint penalisation on the nonlinear equality constraints. It is a two-level optimisation scheme, whose outer layer is a loop on the Lagrange multipliers estimates associated with the nonlinear equality constraints. The inner level consists in approximately solving the primal problem to a critical point. This is performed via an inexact proximal regularised BCD scheme. By means of recent results on non-convex proximal regularised BCD methods \cite{att2013}, we prove convergence of the inner loop to a critical point of the non-convex inner problem under the assumption that the NLP is semi-algebraic. This is the main novelty of our approach, since, until recently, BCD type methods were thought to only have convergence guarantees under the very restrictive assumption that the NLP is convex. Some studies have been conducted in the non-convex case, yet the convergence results are quite weak compared to \cite{att2013}. The inner loop is the key step of the algorithm, which enables distributed computations. In general, as the augmented Lagrangian is not separable, inner iterations cannot be fully parallelised, but some degree of parallelism can be achieved if the interconnection graph is sparse \cite{bert1997}. Finally, convergence of our two-level optimisation technique to a KKT point of the original NLP is proven under standard assumptions. Eventually, the whole algorithm consists in iteratively solving decoupled Quadratic Programs (QP) and updating multipliers estimates in a coordination step.\\
In Section \ref{sec:pb_form}, the general framework of our algorithm is presented. In Sections \ref{sec:out_loop} and \ref{sec:inn_loop}, the algorithm is described and its convergence analysed. Finally, a numerical example is presented in Section \ref{sec:num_ex}.
\section{Background}
\label{sec:background}
\begin{definition}[Normal cone to a convex set]
Let ${\Omega}$ be a convex set in ${{\Rset}^{n}}$ and ${\bar{x}\in{\Omega}}$. The normal cone to ${\Omega}$ at $\bar{x}$ is the set
\begin{align}
{\Ncal}_{\Omega}(\bar{x}):=\left\{v\in\Rset^n~\Big|~\forall{}x\in\Omega,~v^{\Trans}(x-\bar{x})\leq{}0\right\}.
\end{align}
\end{definition}
\begin{lem}[Descent Lemma]
Let ${f\in{C}^2({\Omega},{\Rset})}$ (twice continuously differentiable in ${\Omega}$), where ${\Omega}$ is a convex compact set in ${\Rset}^{n}$. Let ${x,y\in\Omega}$.
\begin{align}
f(y)\leq f(x)+\nabla{}f(x)^{\Trans}(y-x)+\frac{\left\|\nabla^2f\right\|_{\infty}^{\Omega}}{2}\left\|y-x\right\|_2^2
\end{align}
where
\begin{align}
\left\|{\nabla}^2f\right\|_{\infty}^{\Omega}=\max\left\{\left\|\nabla^2f(x)\right\|_{2}~\Big|~x\in\Omega\right\}.
\end{align}
\looseness-1Given ${M\in\Rset^{n\times n}}$, $\left\|M\right\|_{2}$ denotes the induced matrix $2$-norm.
\end{lem}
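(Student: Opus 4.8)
The plan is to reduce the multivariate inequality to a one-dimensional Taylor estimate along the segment joining $x$ and $y$, which is legitimate precisely because $\Omega$ is convex. Concretely, I would introduce the auxiliary function ${\phi:[0,1]\to\Rset}$ defined by ${\phi(t):=f\big(x+t(y-x)\big)}$. Since the segment ${\{x+t(y-x)~|~t\in[0,1]\}}$ is contained in $\Omega$ by convexity, and ${f\in C^2(\Omega,\Rset)}$, the map $\phi$ is ${C^2}$ on ${[0,1]}$ with ${\phi'(t)=\nabla f\big(x+t(y-x)\big)^{\Trans}(y-x)}$ and ${\phi''(t)=(y-x)^{\Trans}\nabla^2 f\big(x+t(y-x)\big)(y-x)}$.

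Next I would apply the exact Taylor expansion with integral remainder to $\phi$ at ${t=0}$, namely ${\phi(1)=\phi(0)+\phi'(0)+\int_0^1(1-s)\phi''(s)\,ds}$ (equivalently, the iterated-integral form ${\int_0^1\!\int_0^t\phi''(s)\,ds\,dt}$). Rewriting this in terms of $f$ gives exactly ${f(y)-f(x)-\nabla f(x)^{\Trans}(y-x)=\int_0^1(1-s)\,(y-x)^{\Trans}\nabla^2 f\big(x+s(y-x)\big)(y-x)\,ds}$.

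It then remains to bound the remainder. For each ${s\in[0,1]}$, the point ${x+s(y-x)}$ lies in $\Omega$, so by the Cauchy--Schwarz/operator-norm inequality ${(y-x)^{\Trans}\nabla^2 f\big(x+s(y-x)\big)(y-x)\leq\big\|\nabla^2 f\big(x+s(y-x)\big)\big\|_2\,\|y-x\|_2^2\leq\big\|\nabla^2 f\big\|_\infty^{\Omega}\,\|y-x\|_2^2}$, where the last step uses the definition of ${\|\nabla^2 f\|_\infty^{\Omega}}$ together with the fact that this maximum is indeed attained, since ${x\mapsto\|\nabla^2 f(x)\|_2}$ is continuous and $\Omega$ is compact. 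Substituting this bound and using ${\int_0^1(1-s)\,ds=\tfrac12}$ yields the claimed inequality.

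I do not anticipate a genuine obstacle here: the only points requiring a word of justification are that convexity of $\Omega$ keeps the whole segment (hence the argument of $\nabla^2 f$) inside the domain, and that compactness of $\Omega$ guarantees the constant ${\|\nabla^2 f\|_\infty^{\Omega}}$ is finite and realised as a maximum. Everything else is the standard second-order Taylor argument, so the write-up should be short.
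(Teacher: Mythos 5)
Your proof is correct: the reduction to the one-dimensional function $\phi(t)=f(x+t(y-x))$, the Taylor expansion with integral remainder, and the bound $(y-x)^{\Trans}\nabla^2f(\cdot)(y-x)\leq\left\|\nabla^2f\right\|_{\infty}^{\Omega}\left\|y-x\right\|_2^2$ on the segment (which lies in $\Omega$ by convexity) together give exactly the stated inequality, and compactness of $\Omega$ with continuity of $\nabla^2 f$ justifies that the constant is finite and attained. The paper states this Descent Lemma as a standard background fact without proof, so there is nothing to compare against; your argument is the canonical one and is complete.
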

\begin{definition}[Regular point]
Let $h\in C^1(\Rset^n,\Rset^m)$. A point ${x\in\Rset^n}$ such that ${h(x)=0}$ is called \textit{regular} if the gradients ${\nabla}h_1(x),\ldots,{\nabla}h_m(x)$ are linearly independent.
\end{definition}
\begin{definition}[Critical point]
\label{def:crit_pt}
Let $f$ be a proper lower semicontinuous function. A necessary condition for $x^\ast$ to be a minimiser of $f$ is that 
\begin{align}
\label{eq:crit_pt_def}
0\in{\partial}f(x^\ast),
\end{align}
where ${\partial}f(x^\ast)$ is the sub-differential of $f$ at $x^\ast$ \cite{rock1998}. Points satisfying \eqref{eq:crit_pt_def} are called \textit{critical points}. 
\end{definition}
The indicator function of a closed subset ${S}$ of ${\Rset^n}$ is denoted ${\delta_S}$ and is defined as
\begin{align}
\delta_S(x) = \begin{cases} 0 &\mbox{if } x \in S \\
+\infty & \mbox{if } x \notin S.\end{cases} 
\end{align}
\begin{lem}[Sub-differential of indicator function \cite{rock1998}]
Given a convex set $C$, for all $x\in C$,
\begin{align}
\partial\delta_C(x)=\Ncal_C(x). 
\end{align}
\end{lem}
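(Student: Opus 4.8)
The plan is to unwind the definitions, the only mild subtlety being that the subdifferential used in the paper (Definition~\ref{def:crit_pt}, in the sense of \cite{rock1998}) is a priori the limiting subdifferential. Since $C$ is convex, $\delta_C$ is a proper convex function, for which the limiting subdifferential coincides with the classical convex subdifferential
\begin{align}
\partial\delta_C(x)=\left\{v\in\Rset^n~\Big|~\delta_C(y)\geq\delta_C(x)+v^{\Trans}(y-x)~\text{for all }y\in\Rset^n\right\}.
\end{align}
I would either invoke this standard equivalence for convex functions directly, or keep the argument self-contained by simply adopting the subgradient inequality above as the working characterisation; either way the remaining work is a short case distinction.

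First I would fix $x\in C$, so $\delta_C(x)=0$, and take $v\in\partial\delta_C(x)$. The subgradient inequality reads $\delta_C(y)\geq v^{\Trans}(y-x)$ for all $y\in\Rset^n$. For $y\notin C$ the left-hand side is $+\infty$ and the inequality carries no information, so the content of the inequality is exactly its restriction to $y\in C$, where it becomes $0\geq v^{\Trans}(y-x)$, i.e. $v^{\Trans}(y-x)\leq 0$ for every $y\in C$. By the definition of the normal cone this is precisely $v\in\Ncal_C(x)$, which gives $\partial\delta_C(x)\subseteq\Ncal_C(x)$.

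Conversely I would take $v\in\Ncal_C(x)$, so that $v^{\Trans}(y-x)\leq 0$ for all $y\in C$. Then for $y\in C$ one has $v^{\Trans}(y-x)\leq 0=\delta_C(y)-\delta_C(x)$, while for $y\notin C$ the inequality $v^{\Trans}(y-x)\leq+\infty=\delta_C(y)-\delta_C(x)$ is trivial; hence $v$ satisfies the subgradient inequality at $x$ and $v\in\partial\delta_C(x)$. Combining the two inclusions yields $\partial\delta_C(x)=\Ncal_C(x)$.

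I do not expect any real obstacle here: the proof is essentially a one-line observation once the cases $y\in C$ and $y\notin C$ are separated. The only point deserving a sentence of care is the reduction, at the outset, from the general subdifferential of \cite{rock1998} to the convex subdifferential, which is valid because $\delta_C$ is convex; everything after that is elementary.
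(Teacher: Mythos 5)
Your proof is correct. The paper does not actually prove this lemma---it simply cites it from \cite{rock1998}---so there is no in-paper argument to compare against; your case split on $y\in C$ versus $y\notin C$ in the subgradient inequality is the standard textbook derivation, and your preliminary remark that the limiting subdifferential reduces to the convex subdifferential for the (convex, proper, lsc) function $\delta_C$ is exactly the right point to flag.
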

\section{Problem formulation}
\label{sec:pb_form}
We consider NLPs of the following form:
\begin{align}
\label{eq:nlpdef}
&\minimise_{z_1,\ldots,z_N}~\sum_{i=1}^NJ_i(z_i)+Q(z_1,\ldots,z_N)\nonumber\\
&\text{s.t.}\nonumber\\
&~~~~~~F_i(z_i)=0,~i\in\left\{1,\ldots,N\right\},\nonumber\\
&~~~~~~G(z_1,\ldots,z_N)=0,\nonumber\\
&~~~~~~z_i\in\Zcal_i,~i\in\left\{1,\ldots,N\right\},
\end{align}
where \({z_i\!\in\!\Rset^{n_i}}\) for \({i\!\in\!\left\{1,\ldots,N\right\}}\) and \(\Zcal_i\) are polytopes in \(\Rset^{n_i}\). The term ${Q(z_1,\ldots,z_N)}$ is a \textit{cost coupling} term and ${G(z_1,\ldots,z_N)\in\Rset^p}$ a \textit{constraint coupling} term. They model the different kinds of systems' interactions, which may appear in a distributed NMPC context.\\ 
For the remainder of the paper, we also define the vector ${z\!:=\!\big(z_1^{\Trans},\ldots,z_N^{\Trans}\big)^{\Trans}\!\in\!\Rset^n}$, with dimension ${n\!:=\!\sum_{i=1}^Nn_i}$. For \({i\!\in\!\left\{1,\ldots,N\right\}}\), as ${\Zcal_i}$ is polytopic, there exists ${A_i\!\in\!\Rset^{q_i\times n_i}}$ and ${b_i\!\in\!\Rset^{q_i}}$ such that \({\Zcal_i\!:=\!\left\{x\!\in\!\Rset^{n_i}\big|A_ix\leq{}b_i\right\}}\). One can thus introduce the polytope \({\Zcal\subset\Rset^n}\) by \({\Zcal\!:=\!\left\{x\in\Rset^n\big|Ax\leq{}b\right\}}\), where \({A\!\in\!\Rset^{q\times n}}\), \({b\!\in\!\Rset^q}\) and \({q\!:=\!\sum_{i=1}^Nq_i}\). By posing ${m:=\sum_{i=1}^{N}m_i}$, we also define 
\begin{align}
H(z):=\big(F_1(z_1)^{\Trans},\ldots,F_N(z_N)^{\Trans},G(z_1,\ldots,z_N)^{\Trans}\big)^{\Trans}\in\Rset^r,
\end{align}
where ${r:=m+p}$. The distributed objective is 
\begin{align}
J(z):=\sum_{i=1}^NJ_i(z_i)+Q(z_1,\ldots,z_N),
\end{align}
so that NLP \eqref{eq:nlpdef} can be rewritten
\begin{align}
\label{eq:nlpnew}
&\minimise_z~J(z)\\
&\text{s.t.}~H(z)=0,~z\in\Zcal.\nonumber
\end{align}
\looseness-1
 \begin{ass}[Smoothness and semi-algebraicity]
 \label{ass:semi_alg}
The functions~${\big\{J_i\big\}_{i=1}^N}$, ${Q}$, ${\big\{F_i\big\}_{i=1}^N}$ and~${G}$ are twice continuously differentiable and semi-algebraic.
\end{ass}
\begin{rk}
The set of semi-algebraic functions is closed with respect to sum, product and composition. The indicator function of a semi-algebraic set is a semi-algebraic function.
\end{rk}
\looseness-1Semi-algebraicity is needed in order to apply the results of \cite{att2013}, which are valid for functions satisfying the Kurdyka-Lojasiewicz (KL) property, which is the case for all real semi-algebraic functions \cite{bolte2006}. From a control perspective, this means that our results are valid for polynomial systems subject to polynomial constraints and objectives.
\begin{ass}
\label{ass:sec_order}
\looseness-1The NLP \eqref{eq:nlpnew} admits an isolated KKT point ${((z^{\ast})^{\Trans},({\mu}^{\ast})^{\Trans},({\lambda}^{\ast})^{\Trans})^{\Trans}\in{\Rset}^{n+r+q}}$, which satisfies
\begin{itemize}
\item$z^{\ast}$ is regular,
\item${((z^{\ast})^{\Trans},({\mu}^{\ast})^{\Trans},({\lambda}^{\ast})^{\Trans})^{\Trans}}$ satisfies the \textit{second order optimality condition}, that is
\begin{align}
&p^{\Trans}{\nabla}^2L(z^{\ast},{\mu}^{\ast},{\lambda}^{\ast})p>0~\text{for~all}~p\in\Rset^n~\text{such~that}~\nonumber\\
&{\nabla}H(z^\ast)^{\Trans}p=0,~A_{I^{\ast}}p=0,
\end{align}
where~${I^{\ast}\subseteq\left\{1,\ldots,q\right\}}$ is the set of indices of active inequality constraints at ${z^{\ast}}$.
\item For all $i\in I^\ast$, $\lambda_i^\ast>0$. 
\end{itemize}
\end{ass}
\section{Outer loop: Partially augmented Lagrangian}
\label{sec:out_loop}
Given a penalty parameter ${\rho>0}$, the augmented Lagrangian associated with problem \eqref{eq:nlpnew} is defined as
\begin{align}
L_\rho(z,\mu) := J(z)+\mu^{\Trans}H(z)+\frac{\rho}{2}\left\|H(z)\right\|_2^2,
\end{align}
where ${\mu\in\Rset^r}$. Only equality constraints \({H(z)=0}\) are penalised. The polytopic inequalities are kept as constraints on the augmented Lagrangian. 
\subsection{Algorithm description}
\label{subsec:out_loop_dsc}
\looseness-1The outer loop is similar to a dual ascent on the multiplier estimates ${\mu_k}$, along with iterative updates of the penalty parameter ${\rho_k}$. At each outer iteration ${k}$, the primal problem    
\begin{align}
\label{eq:prim_pb}
\minimise_{z\in\Zcal}~L_{\rho_k}(z,\mu_k)
\end{align}
is solved to a given level of accuracy \({\epsilon_k>0}\). The outer loop is presented in Algorithm \ref{alg:mult_loop} below. The notation \({d(x,S)}\) stands for the distance function between  a point ${x}$ and a set ${S}$ in ${{\Rset}^n}$ and is defined by
\begin{align}
\label{eq:def_dis}
d(x,S):=\infim_{z\in{S}}\left\|x-z\right\|_2.
\end{align}
\begin{algorithm}[h!]
  \caption{\label{alg:mult_loop}Method of multipliers with partial constraint penalisation}
  \begin{algorithmic}
    \State \textbf{Input:} Objective \(J\), equality constraint \(H\), polytope \(\Zcal\), \({\beta>1}\) and final tolerance on nonlinear equality constraints \({\eta>0}\).
    	\begin{itemize}
    		\item Initial guess for optimiser ${z_0\in\Zcal}$, initial guess for multiplier estimates ${\mu_0\in\Rset^r}$, initial penalty parameter ${\rho_0>1}$, initial tolerance on optimality conditions ${\epsilon_0>0}$.
    	\end{itemize}
    \State \textbf{Initialization:}~\({z\leftarrow z_0}\), \({\mu\leftarrow\mu_0}\), \({\rho\leftarrow\rho_0}\), \({\epsilon\leftarrow\epsilon_0}\).
    \Repeat
    	\State Find \(\bar{z}\in\Zcal\) such that \({d(0,{\nabla}L_\rho(\bar{z},\mu)+\Ncal_\Zcal(\bar{z}))}\leq\epsilon\)\\~~~~using \(z\) as a warm-start
	\State ${z\leftarrow\bar{z}}$
	\State ${\mu\leftarrow\mu+{\rho}H(z)}$, ${\epsilon\leftarrow\epsilon\big/\rho}$, $\rho\leftarrow\beta\rho$
    \Until{${\left\|H(z)\right\|\leq\eta}$} 
   \end{algorithmic}
\end{algorithm}
\setlength{\textfloatsep}{5pt}
\looseness-1 At every outer iteration, a point ${\bar{z}}$ is found, which satisfies inexact optimality conditions. Then, the dual estimate ${\mu}$ is updated, the penalty parameter increased and the tolerance on the first order optimality conditions is reduced. The main tuning variables are the initial value of the penalty parameter $\rho_0$ and the growth coefficient ${\beta}$.
\subsection{Convergence analysis}
\label{subsec:out_loop_cv}
\looseness-1We prove that the augmented Lagrangian iterations are locally convergent to the isolated KKT point \({\big((z^\ast)^{\Trans},(\mu^\ast)^{\Trans},(\lambda^\ast)^{\Trans}\big)^{\Trans}}\) satisfying Assumption \ref{ass:sec_order}. Our analysis is based on the results of \cite{bert1982}. We start by showing that the inner problem is well-defined, which means that for any optimality tolerance \({\epsilon>0}\), there exists a point \({\bar{z}}\), as in Algorithm \ref{alg:mult_loop}, satisfying \({d(0,{\nabla}L_\rho(\bar{z},\mu)+\Ncal_\Zcal(\bar{z}))\leq\epsilon}\), under appropriate conditions on \({\rho}\) and \({\mu}\). 
\begin{lem}[Existence of an inner critical point]
There exists ${\delta>0}$, ${\bar\rho>0}$ and ${\kappa>0}$ such that
\begin{align}
\forall\big(\mu,\rho\big)\in\Scal:=\big\{\big(\mu,\rho\big)\in\Rset^{m+1}\big|\left\|\mu-\mu^\ast\right\|_2\leq\delta\rho,\rho\geq\bar{\rho}\big\},
\end{align}
the problem
\setlength{\textfloatsep}{5pt}
\begin{align}
\label{eq:inn_pb_ball}
&\minimise~L_\rho(z,\mu)\\
&\text{s.t.}~z\in\Zcal\nonumber\\
&~~~~\left\|z-z^\ast\right\|_2<\kappa\nonumber.
\end{align}
\end{lem}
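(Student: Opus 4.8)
The plan is to follow the classical local analysis of the method of multipliers of \cite{bert1982}, adapted to the presence of the polytopic constraint $z\in\Zcal$, producing $\bar z$ as a solution of the first-order system of the inner problem via the implicit function theorem. First I would reduce the optimality condition $0\in\nabla L_\rho(z,\mu)+\Ncal_\Zcal(z)$ to a square system of equations by freezing the active set. Since $\Zcal=\{z\,|\,Az\le b\}$ and at $z^\ast$ strict complementarity holds ($\lambda_i^\ast>0$ for $i\in I^\ast$, third bullet of Assumption \ref{ass:sec_order}) together with $A_iz^\ast<b_i$ for $i\notin I^\ast$, I would seek $(z,\lambda_{I^\ast})$ with $\nabla L_\rho(z,\mu)+A_{I^\ast}^{\Trans}\lambda_{I^\ast}=0$ and $A_{I^\ast}z=b_{I^\ast}$, setting $\lambda_i=0$ for $i\notin I^\ast$. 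Using $\nabla_z L_\rho(z,\mu)=\nabla J(z)+\nabla H(z)\big(\mu+\rho H(z)\big)$, I would introduce the auxiliary multiplier $p:=\mu+\rho H(z)$, the scalar $t:=1/\rho$ and the vector $u:=\mu/\rho$, turning the system into
\begin{align}
\Phi(z,p,\lambda_{I^\ast};t,u):=\big(\nabla J(z)+\nabla H(z)p+A_{I^\ast}^{\Trans}\lambda_{I^\ast},\ H(z)-tp+u,\ A_{I^\ast}z-b_{I^\ast}\big)=0,\nonumber
\end{align}
which is smooth in all arguments by Assumption \ref{ass:semi_alg} and, at $(t,u)=(0,0)$, is solved by $(z^\ast,\mu^\ast,\lambda_{I^\ast}^\ast)$ (the KKT point).

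Next I would check that the Jacobian of $\Phi$ with respect to $(z,p,\lambda_{I^\ast})$ at that point is the KKT matrix
\begin{align}
\begin{pmatrix}\nabla^2 L(z^\ast,\mu^\ast,\lambda^\ast)&\nabla H(z^\ast)&A_{I^\ast}^{\Trans}\\ \nabla H(z^\ast)^{\Trans}&0&0\\ A_{I^\ast}&0&0\end{pmatrix},\nonumber
\end{align}
whose nonsingularity follows from regularity of $z^\ast$ (linear independence of the active constraint gradients) and the second-order sufficiency condition of Assumption \ref{ass:sec_order}, by the standard argument that any vector in its kernel has $z$-component in the critical subspace $\{p:\nabla H(z^\ast)^{\Trans}p=0,\ A_{I^\ast}p=0\}$ and is therefore zero. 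By the implicit function theorem there exist smooth maps $z(t,u),p(t,u),\lambda_{I^\ast}(t,u)$ on a neighbourhood of $(0,0)$ solving $\Phi=0$, with $z(0,0)=z^\ast$. I would then fix $\kappa$, choose $\bar\rho$ large and $\delta$ small so that $(\mu,\rho)\mapsto(1/\rho,\mu/\rho)$ maps $\Scal$ into that neighbourhood — note $\|\mu/\rho\|_2\le\|\mu^\ast\|_2/\rho+\delta$, which is small for $\rho\ge\bar\rho$ and $\delta$ small — and, shrinking $\delta$ and enlarging $\bar\rho$ further if needed, so that the induced $\bar z:=z(\mu,\rho)$ satisfies $\|\bar z-z^\ast\|_2<\kappa$.

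Finally I would verify that $\bar z$ is genuinely a (local) minimiser of \eqref{eq:inn_pb_ball}, not merely stationary. By continuity, $\lambda_{I^\ast}(\mu,\rho)>0$ (strict complementarity persists) and $A_i\bar z<b_i$ for $i\notin I^\ast$, so $\bar z\in\Zcal$ and $(\bar z,\lambda(\mu,\rho))$ satisfies the full KKT system of $\minimise_{z\in\Zcal}L_\rho(z,\mu)$; in particular $0\in\nabla L_\rho(\bar z,\mu)+\Ncal_\Zcal(\bar z)$, which, since $\bar z$ lies in the interior of the ball, is exactly the condition required in Algorithm \ref{alg:mult_loop} (with zero residual). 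Positive definiteness of $\nabla^2_{zz}L_\rho(\bar z,\mu)$ on $\{p:A_{I^\ast}p=0\}$ for $\rho$ large follows from the decomposition $\nabla^2_{zz}L_\rho=\nabla^2_{zz}L(\cdot,\mu+\rho H)+\rho\,\nabla H\nabla H^{\Trans}$ plus curvature terms proportional to $H(\bar z)$, the second-order condition at $z^\ast$, and a Debreu-type lemma; combined with feasibility and the first-order conditions this makes $\bar z$ a strict local minimiser. The main obstacle I anticipate is making the implicit-function-theorem neighbourhood uniform over the unbounded set $\Scal$ — i.e.\ controlling all quantities simultaneously in $1/\rho$ and $\mu/\rho$ as $\rho\to\infty$, together with the active-set bookkeeping; the Debreu-type positive-definiteness step is routine once the stationary point is in hand.
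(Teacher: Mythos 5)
Your proposal is correct in substance but takes a genuinely different route from the paper. The paper disposes of existence in one line by invoking Proposition~$2.11$ of Bertsekas~(1982) --- the classical local analysis of the method of multipliers --- and then only argues that a critical point of the ball-constrained problem \eqref{eq:inn_pb_ball} yields an approximate critical point for the $\Zcal$-constrained one, using that the ball constraint is inactive at the point found so that $\Ncal_{\Zcal\cap\Bcal(z^\ast,\kappa)}$ reduces to $\Ncal_\Zcal$ there. You instead reconstruct the underlying argument from scratch: active-set freezing via strict complementarity, the change of variables $(t,u)=(1/\rho,\mu/\rho)$, the implicit function theorem on the perturbed KKT system, and a Debreu/Finsler-type second-order argument at $\bar z$. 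This buys a self-contained proof that handles the polytopic constraint directly (which is exactly why Assumption~\ref{ass:sec_order} carries strict complementarity) and produces an exact critical point rather than an $\epsilon$-approximate one. Three remarks. First, the uniformity over the unbounded set $\Scal$ that you flag as the main obstacle is already resolved by your own change of variables: $(\mu,\rho)\in\Scal$ sends $(t,u)$ into the fixed compact set $[0,1/\bar\rho]\times\{u:\left\|u-t\mu^\ast\right\|_2\leq\delta\}$, which lies in any prescribed neighbourhood of the origin once $\delta$ is small and $\bar\rho$ is large, so no further uniformity argument is needed. Second, nonsingularity of your KKT matrix requires linear independence of the columns of $\big[\nabla H(z^\ast)~A_{I^\ast}^{\Trans}\big]$, i.e.\ LICQ over the equality and active inequality constraints jointly; this is slightly more than the paper's definition of a regular point (equality gradients only), but the paper implicitly needs the same strengthening in the proof of Theorem~\ref{th:out_cv}, so it is a defect of the stated hypotheses rather than of your argument. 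Third, to obtain the claimed \emph{uniqueness} of the minimiser over $\Zcal\cap\Bcal(z^\ast,\kappa)$ you still need the standard compactness step: the minimum of $L_\rho(\cdot,\mu)$ over the closed ball is attained, is not attained on the sphere $\left\|z-z^\ast\right\|_2=\kappa$ for $(\mu,\rho)\in\Scal$, and your IFT branch is the only stationary point in the ball; your construction gives a strict local minimiser but does not by itself exclude other minimisers.
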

has a unique minimiser. Moreover, for all \({\epsilon>0}\) and for all \({\big(\mu,\rho\big)\in\Scal}\), there exists \({z_\epsilon\in\Rset^n}\) such that
\begin{align}
d(0,{\nabla}L_\rho(z_\epsilon,\mu)+\Ncal_\Zcal(z_\epsilon))\leq\epsilon.
\end{align}
\begin{proof}
\looseness-1The first part of the statement is a direct consequence of Proposition \({2.11}\) in \cite{bert1982}, as the second order optimality condition is satisfied (Assumption \ref{ass:sec_order}). From Definition \ref{def:crit_pt}, taking \({\big(\mu,\rho\big)\in\Scal}\), this implies that we can find \({\tilde{z}\in\Zcal\cap\Bcal(z^\ast,\kappa)}\) such that \({d(0,{\nabla}L_\rho(\tilde{z},\mu)+\Ncal_{\Zcal\cap\Bcal(z^\ast,\kappa)}(\tilde{z}))\leq\epsilon}\) for all \({\epsilon>0}\), where \({\Bcal(z^\ast,\kappa)}\) is the open ball of radius \({\kappa}\) centered at \({z^\ast}\). As ${\ri\Bcal(z^\ast,\kappa)\cap\ri\Zcal\ne\emptyset}$, where ${\ri}$ stands for the relative interior, \({\Ncal_{\Zcal\cap\Bcal(z^\ast,\kappa)}(\tilde{z})=\Ncal_\Zcal(\tilde{z})\cap\Ncal_{\Bcal(z^\ast,\kappa)}(\tilde{z})}\), so that 
\begin{align}
d(0,{\nabla}L_\rho(\tilde{z},\mu)+\Ncal_\Zcal(\tilde{z}))&\leq{d}(0,{\nabla}L_\rho(\tilde{z},\mu)\nonumber\\
&~~~~+\Ncal_{\Zcal\cap\Bcal(z^\ast,\kappa)}(\tilde{z}))\leq\epsilon.
\end{align}
The end of the proof follows by taking ${z_\epsilon:=\tilde{z}}$.
\end{proof}
The next Lemma is a reformulation of the inexact optimality conditions \({d(0,{\nabla}L_{\rho}(\bar{z},{\mu})+{\Ncal}_{\Zcal}(\bar{z}))\leq\epsilon}\). 
\begin{lem}[Inexact optimality conditions]
~Let \({z\in\Rset^n}\), \({f\in C^1(\Rset^n,\Rset)}\) and \({\Zcal}\) be a convex set in \({\Rset^n}\). The following equivalence holds:
\begin{align}
d(0,{\nabla}f(z)+\Ncal_\Zcal(z))\leq\epsilon\Leftrightarrow\exists v\in\Rset^n~\text{such that}\nonumber\\
\left\{
\begin{aligned}
&\left\|v\right\|_2\leq\epsilon\\
&-{\nabla}f(z)\in\Ncal_\Zcal(z)+v.
\end{aligned}
\right.
\end{align}
\end{lem}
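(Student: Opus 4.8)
The plan is to unfold the definition of the distance function \eqref{eq:def_dis} applied to the set ${S:=\nabla f(z)+\Ncal_\Zcal(z)}$, and to exploit the fact that $S$ is closed so that the infimum defining $d(0,S)$ is attained. First I would observe that $\Ncal_\Zcal(z)$, being an intersection of closed half-spaces, is a closed convex cone; hence its translate $S=\nabla f(z)+\Ncal_\Zcal(z)$ is a nonempty closed convex set. Since ${w\mapsto\left\|w\right\|_2}$ is continuous and coercive, the infimum ${d(0,S)=\infim_{w\in S}\left\|w\right\|_2}$ is attained at some ${w^\ast\in S}$.

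For the forward implication, assume ${d(0,S)\leq\epsilon}$ and let ${w^\ast\in S}$ be a minimiser, so ${\left\|w^\ast\right\|_2\leq\epsilon}$. Writing ${w^\ast=\nabla f(z)+u}$ with ${u\in\Ncal_\Zcal(z)}$ and setting ${v:=-w^\ast}$, one gets ${\left\|v\right\|_2\leq\epsilon}$ and ${-\nabla f(z)=u+v\in\Ncal_\Zcal(z)+v}$, which is the right-hand side.

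For the converse, assume there is ${v\in\Rset^n}$ with ${\left\|v\right\|_2\leq\epsilon}$ and ${-\nabla f(z)\in\Ncal_\Zcal(z)+v}$, i.e. ${-\nabla f(z)=u+v}$ for some ${u\in\Ncal_\Zcal(z)}$. Then ${-v=\nabla f(z)+u\in S}$, hence ${d(0,S)\leq\left\|0-(-v)\right\|_2=\left\|v\right\|_2\leq\epsilon}$.

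The argument is essentially definitional; the only point that deserves care — and the main obstacle, such as it is — is the attainment of the infimum in the forward direction, which is why I would highlight the closedness of the normal cone at the outset. Without it one would only recover the right-hand side with ${\left\|v\right\|_2\leq\epsilon+\epsilon'}$ for arbitrary ${\epsilon'>0}$, rather than the sharp bound.
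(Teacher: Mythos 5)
Your proof is correct and follows essentially the same route as the paper, which simply states that the equivalence follows from the definition of the distance as an infimum. Your additional observation that the normal cone is closed, so the infimum is attained and the sharp bound $\left\|v\right\|_2\leq\epsilon$ holds even in the boundary case $d(0,\nabla f(z)+\Ncal_\Zcal(z))=\epsilon$, is a legitimate refinement that the paper's one-line proof glosses over.
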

\begin{proof}
This directly follows from the definition of the distance as an infimum.
\end{proof}
As \({\big((z^\ast)^{\Trans},(\mu^\ast)^{\Trans},(\lambda^\ast)^{\Trans}\big)^{\Trans}}\) is an isolated KKT point, there exists \({\nu>0}\) such that \({z^\ast}\) is the only critical point of \eqref{eq:nlpnew} in \({\Bcal(z^{\ast},\nu)}\). 
From the statement of Algorithm \ref{alg:mult_loop}, the penalty parameter ${\rho}$ is guaranteed to increase to infinity and the optimality tolerance ${\epsilon}$ to converge to zero. The following convergence theorem is valid if the iterates ${z_k}$ stay in the ball ${\Bcal(z^{\ast},\min\left\{\nu,\kappa\right\})}$ for a large enough ${k}$. As noticed in \cite{bert1982}, it is generally the case in practice, as warm-starting the inner problem \eqref{eq:inn_pb_ball} on the previous solution leads the iterates to stay around the same local minimum $z^{\ast}$. Thus one can reasonably assume that ${z_k\in\Bcal(z^{\ast},\min\left\{\nu,\kappa\right\})}$.
\begin{theo}[Local convergence to a KKT point]
\label{th:out_cv}
Let ${\left\{z_k\right\}}$ and ${\left\{\mu_k\right\}}$ be sequences in ${\Rset^n}$ such that 
\begin{align} 
\left\{
\begin{aligned}
&z_k\in{\Zcal}\cap\cl\big(\Bcal\big(z^\ast,\min\left\{\kappa,\nu\right\}\big)\big)\\
&-{\nabla}L_{\rho_k}(z_k,\mu_k)\in\Ncal_\Zcal(z_k)+d_k
\end{aligned}
\right.
\end{align}
where 
\begin{itemize}
\item \({\left\{\rho_k\right\}}\) is increasing and \({\rho_k{\rightarrow}+\infty}\),
\item \({\left\{\mu_k\right\}}\) and \({\left\{\lambda_k\right\}}\), associated with $\Ncal_\Zcal\left(z_k\right)$, are bounded,
\item \({\left\|d_k\right\|_2\rightarrow0}\).
\end{itemize}
Assume that all limit points of ${\left\{z_k\right\}}$ are regular. We then have that \({z_k\rightarrow{z}^\ast}\) and \({\tilde{\mu}_k\rightarrow\mu^\ast}\), where \({\tilde{\mu}_k:=\mu_k+{\rho_k}H(z_k)}\), ${z^{\ast}}$ and ${\mu^{\ast}}$ are defined in Assumption \ref{ass:sec_order}.
\end{theo}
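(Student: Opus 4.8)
\emph{Proof proposal.}

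The plan is to follow the classical analysis of the method of multipliers (cf. \cite{bert1982}): since the primal iterates are trapped in a compact set, extract a convergent subsequence, show that every limit point is a KKT point of \eqref{eq:nlpnew} lying in $\Bcal(z^\ast,\nu)$, conclude it must equal $z^\ast$ by the isolation assumption, and finally promote subsequential convergence to convergence of the whole sequence. The dual statement comes for free once boundedness of $\{\tilde\mu_k\}$ is established.

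\emph{Step 1: rewrite the inexact stationarity.} Using Lemma~4 and $\nabla L_{\rho_k}(z_k,\mu_k)=\nabla J(z_k)+\nabla H(z_k)\big(\mu_k+\rho_kH(z_k)\big)=\nabla J(z_k)+\nabla H(z_k)\tilde\mu_k$, together with the fact that for the polytope $\Zcal=\{x\,|\,Ax\leq b\}$ one has $\Ncal_\Zcal(z_k)=\{A^{\Trans}\lambda\,|\,\lambda\geq0,\ \lambda_i=0\text{ if }A_iz_k<b_i\}$, I would record that there is $\lambda_k\geq 0$ with $\lambda_{k,i}(A_iz_k-b_i)=0$ (the $\lambda_k$ named in the hypotheses) such that
\begin{align}
\nabla J(z_k)+\nabla H(z_k)\,\tilde\mu_k+A^{\Trans}\lambda_k+d_k=0 .
\end{align}

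\emph{Step 2: compactness and boundedness of $\{\tilde\mu_k\}$.} The set $\Zcal\cap\cl\big(\Bcal(z^\ast,\min\{\kappa,\nu\})\big)$ is compact, so $\{z_k\}$ has limit points and $\{\nabla J(z_k)\}$ is bounded; combined with boundedness of $\{\lambda_k\}$ and $\|d_k\|_2\to 0$, the right-hand side of $\nabla H(z_k)\tilde\mu_k=-\nabla J(z_k)-A^{\Trans}\lambda_k-d_k$ is bounded. If $\{\tilde\mu_k\}$ were unbounded, I would pass to a subsequence with $z_{k_j}\to\bar z$ and $\tilde\mu_{k_j}/\|\tilde\mu_{k_j}\|_2\to w$, $\|w\|_2=1$, divide the last identity by $\|\tilde\mu_{k_j}\|_2$ and let $j\to\infty$, obtaining $\nabla H(\bar z)w=0$ with $w\neq 0$ — contradicting regularity of the limit point $\bar z$. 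Hence $\{\tilde\mu_k\}$ is bounded, and since $\{\mu_k\}$ is bounded and $\rho_k\to+\infty$, $H(z_k)=(\tilde\mu_k-\mu_k)/\rho_k\to 0$.

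\emph{Step 3: limit points coincide with the isolated KKT point.} Let $z_{k_j}\to\bar z$ and, refining the subsequence, $\tilde\mu_{k_j}\to\bar\mu$, $\lambda_{k_j}\to\bar\lambda\geq 0$. Passing to the limit in Step~1 gives $\nabla J(\bar z)+\nabla H(\bar z)\bar\mu+A^{\Trans}\bar\lambda=0$; Step~2 gives $H(\bar z)=0$; and complementary slackness holds because $A_i\bar z<b_i$ forces $A_iz_{k_j}<b_i$ for $j$ large, hence $\lambda_{k_j,i}=0$ and $\bar\lambda_i=0$. Thus $(\bar z,\bar\mu,\bar\lambda)$ is a KKT triple of \eqref{eq:nlpnew}, so $\bar z$ is a critical point of \eqref{eq:nlpnew} in $\Bcal(z^\ast,\nu)$, whence $\bar z=z^\ast$; regularity of $z^\ast$ makes the KKT multipliers at $z^\ast$ unique (equivalently, $(z^\ast,\mu^\ast,\lambda^\ast)$ is the isolated KKT triple), so $\bar\mu=\mu^\ast$. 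Since $\{z_k\}$ lies in a compact set with unique limit point $z^\ast$, $z_k\to z^\ast$; since $\{\tilde\mu_k\}$ is bounded with unique limit point $\mu^\ast$, $\tilde\mu_k\to\mu^\ast$.

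\emph{Main obstacle.} The two non-routine points are (i) the boundedness argument for $\{\tilde\mu_k\}$, which is exactly where the hypothesis ``all limit points of $\{z_k\}$ are regular'' is indispensable, and (ii) pinning the limiting multiplier down to $\mu^\ast$ rather than merely ``some KKT multiplier at $z^\ast$,'' which rests on uniqueness of multipliers at the regular point $z^\ast$ / the isolation in Assumption~\ref{ass:sec_order}. Everything else is continuity, compactness and active-set bookkeeping, with the polytopic structure of $\Zcal$ entering only to make the normal-cone manipulations in Step~1 explicit.
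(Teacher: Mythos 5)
Your proposal is correct and follows essentially the same route as the paper: extract a convergent subsequence by compactness, rewrite the inexact stationarity with explicit polytope multipliers, pass to the limit to obtain a KKT triple, invoke the isolation of $z^\ast$ in $\Bcal(z^\ast,\min\{\kappa,\nu\})$ to identify the limit point, and use regularity to pin the multiplier down to $\mu^\ast$ before upgrading subsequential convergence to full convergence. The only cosmetic difference is in how $\{\tilde{\mu}_k\}$ is controlled: you prove boundedness by a normalization/contradiction argument, whereas the paper solves for $\tilde{\mu}_{\phi(k)}$ explicitly via the left pseudo-inverse $\big({\nabla}H(z_{\phi(k)}){\nabla}H(z_{\phi(k)})^{\Trans}\big)^{-1}{\nabla}H(z_{\phi(k)})$ at the regular limit point — both hinge on exactly the same regularity hypothesis.
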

\begin{proof}
As ${\Zcal\cap\cl\big(\Bcal\big(z^\ast,\min\left\{\kappa,\nu\right\}\big)\big)}$ is compact, by Weierstrass theorem, there exists an increasing mapping ${\phi:\Nset\rightarrow\Nset}$ and a point ${z'\in\Zcal\cap\cl\big(\Bcal\big(z^\ast,\min\left\{\kappa,\nu\right\}\big)\big)}$ such that \({\left\{z_{\phi(k)}\right\}}\) converges to \({z'}\), which is regular by assumption. We also know by assumption that 
\begin{align}
\exists v_{\phi(k)}&\in\Ncal_{\Zcal}(z_{\phi(k)}),\nonumber\\
0&={\nabla}L_{\rho_{\phi(k)}}(z_{\phi(k)},\mu_{\phi(k)})+v_{\phi(k)}+d_{\phi(k)}.
\end{align}
By definition of the normal cone to ${\Zcal}$ at ${z_{\phi(k)}}$, this means that 
\begin{align}
\exists \lambda_{\phi(k)}\geq0,\left\{\begin{aligned}
&{\nabla}L_{\rho_{\phi(k)}}(z_{\phi(k)},\mu_{\phi(k)})+A^{\Trans}\lambda_{\phi(k)}+d_{\phi(k)}=0\\
&\lambda_{\phi(k)}\geq0,~{\lambda}_{\phi(k)}^{\Trans}(b-Az_{\phi(k)})=0.
\end{aligned}
\right.
\end{align}
We thus have
\begin{align}
\label{eq:stat_inex_k}
{\nabla}J(z_{\phi(k)})+{\nabla}H(z_{\phi(k)})^{\Trans}\tilde{\mu}_{\phi(k)}+A^{\Trans}{\lambda}_{\phi(k)}=-d_{\phi(k)}.
\end{align}
As \({z'}\) is regular, there exists \({K\in\Nset_+}\) such that \({{\nabla}H(z_{\phi(k)})}\) is full-rank for all \({k\geq K}\). Subsequently, for all ${k\geq K}$,
\begin{align}
\tilde{\mu}_{\phi(k)}=&\big({\nabla}H(z_{\phi(k)}){\nabla}H(z_{\phi(k)})^{\Trans}\big)^{-1}{\nabla}H(z_{\phi(k)})\big(-d_{\phi(k)}\nonumber\\
&-{\nabla}J(z_{\phi(k)})-A^{\Trans}\lambda_{\phi(k)}\big).
\end{align}
As \({z_{\phi(k)}\rightarrow z'}\), by continuity of \({\lambda_{\phi(k)}^{\Trans}\big(b-Az_{\phi(k)}\big)=0}\), there exists \({\lambda'\geq0}\) such that 
\begin{align}
\lambda_{\phi(k)}\rightarrow\lambda',~\big(\lambda'\big)^{\Trans}(b-Az')=0.
\end{align}
As \({d_{\phi(k)}\rightarrow0}\), by continuity of \({{\nabla}H}\), this implies that \({\tilde{\mu}_{\phi(k)}{\rightarrow}\tilde{\mu}'}\), where
\begin{align}
\tilde{\mu}':=\big({\nabla}H(z'){\nabla}H(z')^{\Trans}\big)^{-1}{\nabla}H(z')\big(-{\nabla}J(z')-A^{\Trans}\lambda'\big).
\end{align}
By taking limit in \eqref{eq:stat_inex_k}, we obtain
\begin{align}
\left\{
\begin{aligned}
&{\nabla}J(z')+{\nabla}H(z')^{\Trans}{\tilde{\mu}}'+A^{\Trans}{\lambda}'=0\\
&\lambda'{\geq}0,~(\lambda')^{\Trans}(b-Az')=0.
\end{aligned}
\right.
\end{align}
Feasibility of ${z'}$ immediately follows from the convergence of $\tilde{\mu}_{\phi(k)}$ and the fact that $\rho_{\phi(k)}\!\rightarrow\!+\infty$. Thus we have $H(z')\!=\!0$.
This implies that $\big(\big(z'\big)^{\Trans},\big(\big(\tilde{\mu}'\big)^{\Trans},\big(\big(\lambda'\big)^{\Trans}\big)^{\Trans}$ satisfies the KKT conditions.\\
As \(z'\!\in\!\Zcal\cap\cl\Bcal\big(z^\ast,\min\left\{\kappa,\nu\right\}\big)\), in which \(z^{\ast}\) is the unique KKT point by assumption, one can claim that \(z'\!=\!z^{\ast}\). Taking the KKT conditions on \(z^{\ast}\) and \(z'\), it follows that 
\begin{align}
{\nabla}H(z^\ast)^{\Trans}{\tilde{\mu}}^{\ast}+A_{I^\ast}^{\Trans}{\lambda}_{I^\ast}^{\ast}={\nabla}H(z')^{\Trans}{\tilde{\mu}}'+A_{I'}^{\Trans}{\lambda'}_{I'},
\end{align}
where \({I^\ast}\) and \({I'}\) are the sets of indices of active constraints at \({z^\ast}\) and \({z'}\) respectively. Obviously, as \({z^\ast=z'}\), \({I^\ast=I'}\). Thus
\begin{align} 
{\nabla}H(z^\ast)^{\Trans}(\tilde{\mu}^\ast-\tilde{\mu}')+A_{I^\ast}^{\Trans}(\lambda_{I^\ast}-\lambda_{I'})=0.
\end{align}
As \({z^\ast}\) is regular, we finally obtain that \({\tilde{\mu}'=\tilde{\mu}^\ast}\). As all limit points of ${\left\{z_k\right\}}$ converge to ${z^{\ast}}$, one can conclude that ${z_k\rightarrow z^\ast}$ and ${\tilde{\mu}_k\rightarrow \mu^{\ast}}$.
\end{proof}
\begin{rk}
\label{rk:cv_loc}
The convergence result of Theorem \ref{th:out_cv} is local. Yet convergence can be globalised (meaning that the assumption according to which the iterates lie in a ball centered at ${z^\ast}$ can be removed) by applying the dual update scheme proposed in \cite{con1991}.
\end{rk}
\section{Inner loop: Inexact proximal regularised BCD}
\label{sec:inn_loop}
\looseness-1The non-convex inner problem \eqref{eq:prim_pb} is solved in a distributed manner. An inexact BCD scheme is proposed, which is based on the abstract result of \cite{att2013}. The main idea is to perform local convex approximations of each agent's cost and compute descent updates from it. Contrary to the approach of \cite{neco2009}, where splitting is, in some sense, applied after convexification, we show that convexification can be performed after splitting, under the assumption that the KL property is satisfied.
\subsection{Algorithm description}
\label{subsec:inn_loop_dsc}
The proposed algorithm is a proximal regularised inexact BCD, which is based on Theorem \({6.2}\) in \cite{att2013}, providing general properties ensuring global convergence to a critical point of the nonsmooth objective, assuming that it satisfied the KL property. The inner minimisation method is presented in Algorithm \ref{algo:inn_loop} below.
The positive real numbers \({\left\{\alpha_i^l\right\}_{i=1}^N}\) are the regularisation parameters. We assume that 
\begin{align}
\label{eq:def_reg_bnds}
\forall i\in\left\{1,{\ldots},N\right\},\exists \alpha_i^-,\alpha_i^+>0~\text{such that}\nonumber\\
\left\{
\begin{aligned}
&{\alpha}_i^-<{\alpha}_i^+\\
&{\forall}l\in\Nset,\alpha_i^l\in\left[{\alpha}_i^-,{\alpha}_i^+\right].
\end{aligned}
\right.
\end{align}
\begin{algorithm}
	\caption{\label{algo:inn_loop}Inexact proximal regularised BCD}
	\begin{algorithmic}
		\State \textbf{Input:}~Augmented Lagrangian~\({L_{\rho}(.,{\mu})}\),~polytopes \({\left\{\Zcal_i\right\}_{i=1}^N}\) and termination tolerance~\({\tau>0}\).
		\State \textbf{Initialization:} ${z_1^0\in\Zcal_1,\ldots,z_N^0\in\Zcal_N}$\par
							  ${l\leftarrow0}$.
		\While{${\left\|z^{l+1}-z^l\right\|_{\infty}>\tau}$}
			\For{${i\in\left\{1,\ldots,N\right\}}$}
			\State\hspace{-0.45cm}\({z_{i}^{l+1}\leftarrow{\argm}_{z_i\in\Zcal_i}{\nabla}_{z_i}L_{\rho}(z_1^{l+1},{\ldots},z_N^l,{\mu})^{\Trans}(z_i-z_i^l)}\)\par
				  ~~~~~~~~~~~~~~\({+\frac{1}{2}(z_i-z_i^l)^{\Trans}(B_i^l+{\alpha}_i^lI_{n_i})(z_i-z_i^l)}\)
			\EndFor  
			\State \({l\leftarrow l+1}\)
		\EndWhile
	\end{algorithmic}
\end{algorithm}
Matrices ${B_i^l}$ are positive definite and need to be chosen carefully in order to guarantee convergence of Algorithm \ref{algo:inn_loop}, as explained in paragraph \ref{subsec:inn_loop_cv}. Thus, Algorithm \ref{algo:inn_loop} consists in solving convex QPs sequentially. Under appropriate assumptions \cite{bert1997}, computations can be partly parallelised. Moreover, computational efficacy can be improved by warm-starting.  
\subsection{Convergence analysis}
\label{subsec:inn_loop_cv}
When considering the minimisation of functions \({f:\Rset^{n_1}\times\ldots\times\Rset^{n_N}\rightarrow\Rset\cup\left\{+\infty\right\}}\) satisfying the KL property \cite{bolte2006} and having the following structure
\begin{align}
\label{eq:sep_struc}
f(z)=\sum_{i=1}^Nf_i(z_i)+P(z_1,{\ldots},z_N),
\end{align}
where \({f_i}\) are proper lower semicontinuous and \({P}\) is twice continuously differentiable, two key assumptions need to be satisfied by the iterates \({z_i^l}\) of the inexact BCD scheme in order to ensure global convergence \cite{att2013}: a \textit{sufficient decrease} property and the \textit{relative error} condition.
The \textit{sufficient decrease} property asserts that 
\begin{align}
\label{eq:suff_dec}
&\forall i\in\left\{1,\ldots,N\right\},\forall l\in\Nset,\nonumber\\
&f_i(z_i^{l+1})+P(z_1^{l+1},\ldots,z_i^{l+1},\ldots,z_N^l)+{\frac{\alpha_i^l}{2}}\left\|z_i^{l+1}-z_i^l\right\|_2^2\nonumber\\
&~~~~~~~~~~~~~~~~\leq f_i(z_i^l)+P(z_1^{l+1},\ldots,z_{i-1}^{l+1},z_i^l,\ldots,z_N^l)\enspace.
\end{align}
The \textit{relative error} condition states that 
\begin{align}
\label{eq:rel_err}
&\forall i\in\big\{1,\ldots,N\big\},\exists b_i>0~\text{such that}~{\forall}l\in\Nset,\nonumber\\
&\exists v_i^{l+1}\in\partial f_i(z_i^{l+1}),\nonumber\\
&\left\|v_i^{l+1}+{\nabla}_{z_i}P(z_1^{l+1},\ldots,z_{i}^{l+1},z_{i+1}^l,\ldots,z_N^l)\right\|_2\nonumber\\
&~~~~~~~~~~~~~~~~~~~~~~~~~~~~~~~~~~~~~~~~~~~~\leq b_i\left\|z_i^{l+1}-z_i^l\right\|_2\enspace.
\end{align}
This means that at every iteration one can find a vector in the sub-differential of ${f_i}$ computed at the current iterate, which is bounded by the momentum of the sequence ${z_i^{l+1}-z_i^l}$. It is actually a key step in order to apply the KL inequality, as done in \cite{att2013}. For clarity, we state Theorem ${6.2}$ in \cite{att2013}.
\begin{theo}[Convergence of inexact BCD]
\label{th:att_2013}
~\looseness-1Let \({f}\) be a proper lower semi-continuous function with structure \eqref{eq:sep_struc} satisfying the KL property and bounded from below. Let ${\underline{\alpha}>0}$ such that ${\underline{\alpha}\leq\alpha_i^l}$ for all $i\in\left\{1,\ldots,N\right\}$ and $l\in\Nset$. Let \({\left\{z^l\right\}}\) be a sequence satisfying \eqref{eq:suff_dec} and \eqref{eq:rel_err}. If \({\left\{z^l\right\}}\) is bounded, then \({\left\{z^l\right\}}\) converges to a critical point of \({f}\). 
\end{theo}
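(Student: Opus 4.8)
The plan is to follow the proof scheme of \cite{att2013}: I would show that the inexact BCD iterates satisfy, for the \emph{whole} vector $z$, a sufficient decrease inequality, a global relative-error (subgradient) bound and a continuity condition at limit points, and then run the Kurdyka--Lojasiewicz (KL) argument to conclude that $\{z^l\}$ has finite length, hence converges.

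First I would derive an aggregate sufficient decrease. Summing the per-block inequalities \eqref{eq:suff_dec} over $i\in\{1,\ldots,N\}$, the coupling terms $P(z_1^{l+1},\ldots,z_i^{l+1},z_{i+1}^l,\ldots,z_N^l)$ telescope, leaving
\begin{align}
f(z^{l+1})+\sum_{i=1}^N\frac{\alpha_i^l}{2}\left\|z_i^{l+1}-z_i^l\right\|_2^2\leq f(z^l),
\end{align}
hence $f(z^{l+1})+\tfrac{\underline{\alpha}}{2}\|z^{l+1}-z^l\|_2^2\leq f(z^l)$ since $\alpha_i^l\geq\underline{\alpha}$. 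As $f$ is bounded below, the non-increasing sequence $\{f(z^l)\}$ converges to some $\bar{f}$; summing over $l$ then yields $\sum_l\|z^{l+1}-z^l\|_2^2<\infty$, so $\|z^{l+1}-z^l\|_2\rightarrow0$.

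Next I would establish the relative-error bound and analyse the limit set. Since the nonsmooth part is block-separable, $\partial f(z)=\prod_{i=1}^N\big(\partial f_i(z_i)+\nabla_{z_i}P(z)\big)$; picking $v_i^{l+1}$ as in \eqref{eq:rel_err} and setting $w_i^{l+1}:=v_i^{l+1}+\nabla_{z_i}P(z^{l+1})$ gives $w^{l+1}\in\partial f(z^{l+1})$. As $\{z^l\}$ is bounded it stays in a compact set on which $\nabla P$ is Lipschitz (recall $P$ is $C^2$); adding and subtracting $\nabla_{z_i}P(z_1^{l+1},\ldots,z_i^{l+1},z_{i+1}^l,\ldots,z_N^l)$ and using \eqref{eq:rel_err} then gives $\|w^{l+1}\|_2\leq b\,\|z^{l+1}-z^l\|_2$ for some constant $b$, so $d(0,\partial f(z^{l+1}))\rightarrow0$. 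Denoting by $\omega$ the set of limit points of $\{z^l\}$ — nonempty and compact, and connected because $\|z^{l+1}-z^l\|_2\rightarrow0$ — closedness of the graph of $\partial f$ shows every $\bar{z}\in\omega$ is critical, while lower semicontinuity of $f$ together with the structure of the updates yields the continuity condition $f(z^{l_j})\rightarrow f(\bar{z})$ along convergent subsequences, whence $f\equiv\bar{f}$ on $\omega$.

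The main obstacle is the concluding KL step. I would apply the uniformised KL property on the compact set $\omega$: there exist $\varepsilon,\theta>0$ and a concave desingularising function $\varphi$ with $\varphi'(f(z)-\bar f)\,d(0,\partial f(z))\geq1$ whenever $d(z,\omega)<\varepsilon$ and $\bar f<f(z)<\bar f+\theta$. If $f(z^l)=\bar f$ at some $l$, the decrease inequality forces $\{z^l\}$ to be eventually constant; otherwise $z^l$ lies in this region for $l$ large, and chaining concavity of $\varphi$, the KL inequality at $z^l$, the sufficient decrease, and $d(0,\partial f(z^l))\leq b\|z^l-z^{l-1}\|_2$ gives
\begin{align}
\varphi\big(f(z^l)-\bar f\big)-\varphi\big(f(z^{l+1})-\bar f\big)\geq\frac{\underline{\alpha}}{2b}\,\frac{\left\|z^{l+1}-z^l\right\|_2^2}{\left\|z^l-z^{l-1}\right\|_2}.
\end{align}
Combining this with $2\sqrt{st}\leq s+t$ yields $\|z^{l+1}-z^l\|_2\leq\tfrac12\|z^l-z^{l-1}\|_2+\tfrac{b}{\underline{\alpha}}\big(\varphi(f(z^l)-\bar f)-\varphi(f(z^{l+1})-\bar f)\big)$; summing over $l$, the $\varphi$-terms telescope and one obtains $\sum_l\|z^{l+1}-z^l\|_2<\infty$, so $\{z^l\}$ is Cauchy and converges to a point of $\omega$, which is critical. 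The delicate points are the use of the \emph{uniformised} KL inequality over the whole limit set (rather than at a single point) and the passage from square-summable to summable increments via this concavity trick; this is exactly where the KL hypothesis — satisfied by semi-algebraic functions, cf.\ \cite{bolte2006} — is indispensable, whereas the telescoping, the subdifferential calculus and the Lipschitz estimate are routine.
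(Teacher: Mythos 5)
You should first be clear about what you are being compared to: the paper does not prove this statement at all --- it is quoted as Theorem $6.2$ of \cite{att2013} and used as a black box --- so the only meaningful benchmark is the argument of that reference, and your reconstruction follows essentially that route: aggregate sufficient decrease by telescoping the coupling terms $P(z_1^{l+1},\ldots,z_i^{l+1},z_{i+1}^l,\ldots,z_N^l)$, a global subgradient bound obtained from the blockwise relative error plus Lipschitz continuity of $\nabla P$ on a compact set containing the bounded sequence, and then the KL finite-length argument with the concavity/AM--GM trick. The telescoping identity, the aggregated bound $\|w^{l+1}\|_2\leq b\|z^{l+1}-z^l\|_2$, and the chain of inequalities leading to $\sum_l\|z^{l+1}-z^l\|_2<\infty$ are all correct as you state them.

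The step you gloss over is the continuity condition, and as written it does not hold up. Lower semicontinuity gives only $\liminf_j f(z^{l_j})\geq f(\bar z)$, and ``the structure of the updates'' is not available to you: the theorem concerns an \emph{arbitrary} sequence satisfying \eqref{eq:suff_dec} and \eqref{eq:rel_err}, not iterates produced by a specific proximal minimisation step, so you cannot invoke the usual argument (optimality of the subproblem) that yields $\limsup_j f(z^{l_j})\leq f(\bar z)$. Moreover, you need this value convergence \emph{before} the criticality claim, since closedness of the graph of the limiting subdifferential requires $(z^{l_j},f(z^{l_j}))\to(\bar z,f(\bar z))$ and not merely $z^{l_j}\to\bar z$; and you need it again to assert $f\equiv\bar f$ on the limit set $\omega$ before applying the uniformised KL inequality there. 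This is exactly why the abstract convergence result in \cite{att2013} carries a separate continuity hypothesis (their condition H3), which the block-coordinate results then discharge using continuity of the nonsmooth blocks relative to their domains or the minimisation property of the exact/inexact steps. In the present paper's application the point is harmless --- the blocks $\Gamma_i=J_i+\Psi_i+\delta_{\Zcal_i}$ are continuous on the closed polytopes $\Zcal_i$ and the iterates remain in $\Zcal_i$, so $f(z^{l_j})\to f(\bar z)$ holds automatically --- but your proof of the abstract statement needs either that extra hypothesis stated explicitly or that observation supplied; with it, the rest of your argument goes through.
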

\begin{rk}
Note that \({{\delta}_{\Zcal}+L_{\rho}(.,{\mu})}\) has the structure \eqref{eq:sep_struc}.
\end{rk}
\looseness-1Our convergence analysis then mainly consists in verifying that the iterates generated by Algorithm \ref{algo:inn_loop} satisfy \eqref{eq:suff_dec} and \eqref{eq:rel_err}. 
The following definitions are necessary for the remainder of the proof:
\begin{align}
\label{eq:use_defs}
&\forall i\in\left\{1,\ldots,N\right\},\forall l\in\Nset,\nonumber\\
&S(z_1^{l+1},\ldots,z_{i-1}^{l+1},z_i,z_{i+1}^l,{\ldots},z_N^l):=\nonumber\\
&~~~~~~~~~~~~~~~~~~{\mu}_G^{\Trans}G(z_1^{l+1},{\ldots},z_{i-1}^{l+1},z_i,z_{i+1}^l,{\ldots},z_N^l)\nonumber\\
&~~~~~~~~~~~~~~~~~~+{\frac{\rho}{2}}\left\|G(z_1^{l+1},{\ldots},z_{i-1}^{l+1},z_i,z_{i+1}^l,{\ldots},z_N^l)\right\|_2^2\enspace,\nonumber\\
&R_i^l(z_i):=Q(z_1^{l+1},{\ldots},z_{i-1}^{l+1},z_i,z_{i+1}^l,{\ldots},z_N^l)\nonumber\\
&~~~~~~~~~~+S(z_1^{l+1},{\ldots},z_{i-1}^{l+1},z_i,z_{i+1}^l,{\ldots},z_N^l)\enspace,\nonumber\\
&\Psi_i(z_i):={\mu}_i^{\Trans}F_i(z_i)+{\frac{\rho}{2}}\left\| F_i(z_i)\right\|_2^2\enspace,\nonumber\\
&L_i^l(z_i):=J_i(z_i)+{\Psi}_i(z_i)+R_i^l(z_i)\enspace,\nonumber\\
&\Gamma_i(z_i){:=}J_i(z_i)+\Psi_i(z_i)+{\delta}_{\Zcal_i}(z_i)\enspace,\nonumber\\
&\Phi_i^l(z_i){:=}L_i^l(z_i)+{\delta}_{\Zcal_i}(z_i)\enspace,
\end{align}
where ${{\mu}_G\in\Rset^r}$ is the subvector of multipliers associated with the equality constraint ${G(z_1,{\ldots},z_N)=0}$. 
\begin{lem}[Bound on hessian norm]
\begin{align}
\forall i\in\left\{1,\ldots,N\right\},\forall l\in\Nset,~\left\|{\nabla}^2L_i^l\right\|_\infty^{{\Zcal}_i}\leq C_i,
\end{align}
where
\begin{align}
C_i:=\left\|{\nabla}^2J_i\right\|_{\infty}^{\Zcal_i}+\left\|{\nabla}^2\Psi_i\right\|_{\infty}^{\Zcal_i}+\max_{i\in\left\{1,\ldots,N\right\}}\left\|{\nabla}_i^2(S+Q)\right\|_\infty^{\Zcal}.
\end{align}
\end{lem}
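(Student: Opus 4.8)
The plan is to bound $\nabla^2L_i^l$ term by term via the triangle inequality for the induced matrix $2$-norm, and then to argue that the bounds obtained do not depend on $l$, since the only role played by the frozen blocks $z_j^{l+1}$ ($j<i$) and $z_j^l$ ($j>i$) is that they lie in the polytopes $\Zcal_j$.

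First I would use the definitions in \eqref{eq:use_defs} to write, for every $z_i\in\Zcal_i$ and every $l\in\Nset$,
\begin{align}
\nabla^2L_i^l(z_i)=\nabla^2J_i(z_i)+\nabla^2\Psi_i(z_i)+\nabla_i^2(S+Q)\big(z_1^{l+1},\ldots,z_{i-1}^{l+1},z_i,z_{i+1}^l,\ldots,z_N^l\big),
\end{align}
where $\nabla_i^2$ denotes the $(i,i)$ diagonal block of the Hessian taken with respect to $z_i$ only; this is legitimate because $R_i^l$ is exactly $Q+S$ evaluated at the partially updated point, with all blocks other than $z_i$ held fixed. Applying the triangle inequality and then taking the maximum over $z_i\in\Zcal_i$ gives
\begin{align}
\big\|\nabla^2L_i^l\big\|_\infty^{\Zcal_i}\leq\big\|\nabla^2J_i\big\|_\infty^{\Zcal_i}+\big\|\nabla^2\Psi_i\big\|_\infty^{\Zcal_i}+\max_{z_i\in\Zcal_i}\big\|\nabla_i^2(S+Q)\big(z_1^{l+1},\ldots,z_i,\ldots,z_N^l\big)\big\|_2.
\end{align}
By Assumption \ref{ass:semi_alg} the functions $J_i$, $F_i$ (hence $\Psi_i$), $Q$ and $G$ (hence $S$) are twice continuously differentiable, and each $\Zcal_i$ is a polytope, hence compact; therefore all these maxima are attained and finite.

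The only step requiring care is the last term. The key observation is that, as $z_i$ ranges over $\Zcal_i$ while each frozen block stays in $\Zcal_j$, the full argument $\big(z_1^{l+1},\ldots,z_i,\ldots,z_N^l\big)$ ranges over a subset of the product polytope $\Zcal=\Zcal_1\times\cdots\times\Zcal_N$; consequently
\begin{align}
\max_{z_i\in\Zcal_i}\big\|\nabla_i^2(S+Q)\big(z_1^{l+1},\ldots,z_i,\ldots,z_N^l\big)\big\|_2\leq\max_{z\in\Zcal}\big\|\nabla_i^2(S+Q)(z)\big\|_2=\big\|\nabla_i^2(S+Q)\big\|_\infty^{\Zcal}\leq\max_{j\in\left\{1,\ldots,N\right\}}\big\|\nabla_j^2(S+Q)\big\|_\infty^{\Zcal}.
\end{align}
Summing the three bounds yields $\big\|\nabla^2L_i^l\big\|_\infty^{\Zcal_i}\leq C_i$, and since none of the three terms depends on $l$ (nor on the particular frozen values), the estimate holds uniformly over $l\in\Nset$. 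The triangle-inequality manipulation is routine; the main point to make explicit is precisely that the partially updated argument of $S+Q$ always belongs to $\Zcal$, which is what makes the bound independent of $l$ and collapses it to a single constant $C_i$. It is worth noting in passing that $\big\|\nabla^2\Psi_i\big\|_\infty^{\Zcal_i}$ and $\big\|\nabla_j^2(S+Q)\big\|_\infty^{\Zcal}$, hence $C_i$, depend on the fixed inner-problem data $\rho$ and $\mu$, which is harmless since both are constant throughout Algorithm \ref{algo:inn_loop}.
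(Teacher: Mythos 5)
Your proof is correct and follows essentially the same route as the paper, whose own proof is a one-line appeal to the definitions in \eqref{eq:use_defs} and the compactness of the $\Zcal_i$; you simply make explicit the decomposition $\nabla^2L_i^l=\nabla^2J_i+\nabla^2\Psi_i+\nabla_i^2(S+Q)$, the triangle inequality, and the key uniformity-in-$l$ observation that the partially updated argument always lies in $\Zcal$. No gaps.
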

\begin{proof}
The proof directly follows from the definitions \eqref{eq:use_defs} and the fact that the functions involved in \eqref{eq:use_defs} are twice continuously differentiable over compact sets ${\Zcal_i}$.
\end{proof}
\begin{ass}
\label{ass:hss_approx}
The matrices ${B_i^l}$ can be chosen so that ${B_i^l-C_iI_{n_i}\succ0}$ and ${2C_iI_{n_i}-B_i^l\succ0}$, for all ${i\in\left\{1,\ldots,N\right\}}$ and all ${l\in\Nset}$.
\end{ass}
\begin{rk}
Such an assumption requires knowledge of the Lipschitz constant of the gradient of the augmented Lagrangian. However, a simple backtracking procedure may be applied in practice. 
\end{rk}
\begin{lem}[Sufficient decrease]
For all ${i\in\left\{1,\ldots,N\right\}}$ and all ${l\in\Nset}$,
\begin{align}
{\Phi}_i^l(z_i^{l+1})+{\frac{\alpha_i^l}{2}}\left\|z_i^{l+1}-z_i^l\right\|_2^2\leq{\Phi}_i^l(z_i^l).
\end{align}
\end{lem}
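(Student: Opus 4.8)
The statement to prove is the \emph{sufficient decrease} inequality $\Phi_i^l(z_i^{l+1})+\tfrac{\alpha_i^l}{2}\|z_i^{l+1}-z_i^l\|_2^2\le\Phi_i^l(z_i^l)$, and the natural route is to exploit the defining optimality of $z_i^{l+1}$ in Algorithm~\ref{algo:inn_loop} together with the Descent Lemma applied to the smooth part $L_i^l$. First I would record that, by construction, $z_i^{l+1}$ is the exact minimiser over $z_i\in\Zcal_i$ of the strongly convex model
\begin{align}
m_i^l(z_i):={\nabla}_{z_i}L_i^l(z_i^l)^{\Trans}(z_i-z_i^l)+\tfrac12(z_i-z_i^l)^{\Trans}(B_i^l+\alpha_i^lI_{n_i})(z_i-z_i^l),
\end{align}
where I have used that ${\nabla}_{z_i}L_{\rho}(z_1^{l+1},\ldots,z_N^l,\mu)={\nabla}_{z_i}L_i^l(z_i^l)$ by the definitions \eqref{eq:use_defs}. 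Comparing the model value at $z_i^{l+1}$ with its value at the feasible point $z_i^l$, and noting $m_i^l(z_i^l)=0$, gives
\begin{align}
{\nabla}_{z_i}L_i^l(z_i^l)^{\Trans}(z_i^{l+1}-z_i^l)+\tfrac12(z_i^{l+1}-z_i^l)^{\Trans}(B_i^l+\alpha_i^lI_{n_i})(z_i^{l+1}-z_i^l)\le0.
\end{align}

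Next I would invoke the Descent Lemma (Lemma~2) for the function $L_i^l\in C^2(\Zcal_i,\Rset)$ between the two points $z_i^l,z_i^{l+1}\in\Zcal_i$, using the Hessian bound $\|{\nabla}^2L_i^l\|_\infty^{\Zcal_i}\le C_i$ established in Lemma~4:
\begin{align}
L_i^l(z_i^{l+1})\le L_i^l(z_i^l)+{\nabla}_{z_i}L_i^l(z_i^l)^{\Trans}(z_i^{l+1}-z_i^l)+\tfrac{C_i}{2}\|z_i^{l+1}-z_i^l\|_2^2.
\end{align}
Substituting the bound on the gradient inner product from the previous display yields
\begin{align}
L_i^l(z_i^{l+1})\le L_i^l(z_i^l)-\tfrac12(z_i^{l+1}-z_i^l)^{\Trans}\bigl(B_i^l+\alpha_i^lI_{n_i}-C_iI_{n_i}\bigr)(z_i^{l+1}-z_i^l).
\end{align}
Then I would add $\delta_{\Zcal_i}(z_i^{l+1})=\delta_{\Zcal_i}(z_i^l)=0$ to both sides (both iterates are feasible) to turn $L_i^l$ into $\Phi_i^l$, and rearrange so the $\alpha_i^l$ term appears on the left:
\begin{align}
\Phi_i^l(z_i^{l+1})+\tfrac{\alpha_i^l}{2}\|z_i^{l+1}-z_i^l\|_2^2\le\Phi_i^l(z_i^l)-\tfrac12(z_i^{l+1}-z_i^l)^{\Trans}\bigl(B_i^l-C_iI_{n_i}\bigr)(z_i^{l+1}-z_i^l).
\end{align}
By the first half of Assumption~\ref{ass:hss_approx}, $B_i^l-C_iI_{n_i}\succ0$, so the last quadratic term is nonnegative and dropping it gives exactly the claimed inequality.

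The only genuinely delicate points are bookkeeping rather than conceptual: I must be careful that $B_i^l$ acts on the $i$-th block only and that the "$\nabla^2 L_i^l$" in Lemma~4 is the full Hessian of $L_i^l$ viewed as a function of $z_i$ alone (with the other blocks frozen at their current values), which is consistent with how $C_i$ was defined. It is worth remarking that the second half of Assumption~\ref{ass:hss_approx}, namely $2C_iI_{n_i}-B_i^l\succ0$, is not needed here — it will be used later to establish the relative error condition \eqref{eq:rel_err}. The main obstacle, if any, is simply making sure the identification ${\nabla}_{z_i}L_{\rho}(z_1^{l+1},\ldots,z_i^l,\ldots,z_N^l,\mu)={\nabla}_{z_i}L_i^l(z_i^l)$ is airtight given the somewhat heavy notation of \eqref{eq:use_defs}, and that the feasibility of both iterates (which is automatic since each $z_i^{l+1}\in\Zcal_i$ by the $\argm$ constraint) justifies the indicator-function manipulation.
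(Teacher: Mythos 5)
Your proposal is correct and follows essentially the same route as the paper's proof: compare the QP model value at $z_i^{l+1}$ with the feasible point $z_i^l$ to bound the gradient term, apply the Descent Lemma with the Hessian bound $C_i$, invoke $B_i^l-C_iI_{n_i}\succ0$ from Assumption~\ref{ass:hss_approx}, and add the vanishing indicator terms. Your side remark that only the first half of Assumption~\ref{ass:hss_approx} is needed here is also accurate.
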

\begin{proof}
By definition of ${z_i^{l+1}}$ in Algorithm \ref{algo:inn_loop}, 
\begin{align}
&\forall z_i\in\Zcal_i,\nonumber\\
&\nabla L_i^l(z_i^l)^{\Trans}(z_i^{l+1}-z_i^l)\nonumber\\
&~~~~~+{\frac{1}{2}}(z_i^{l+1}-z_i^l)^{\Trans}(B_i^l+{\alpha}_i^lI_{n_i})(z_i^{l+1}-z_i^l)\nonumber\\
&\leq {\nabla}L_i^l(z_i^l)^{\Trans}(z_i-z_i^l)+{\frac{1}{2}}(z_i-z_i^l)^{\Trans}(B_i^l+{\alpha}_i^lI_{n_i})(z_i-z_i^l), 
\end{align}
which by substituting ${z_i}$ with ${z_i^l\in\Zcal_i}$ implies that
\begin{align}
\label{eq:ineq_z_i_l}
&\nabla L_i^l(z_i^l)^{\Trans}(z_i^{l+1}-z_i^l)\nonumber\\
&~~~~~~~+{\frac{1}{2}}(z_i^{l+1}-z_i^l)^{\Trans}(B_i^l+{\alpha}_i^lI_{n_i})(z_i^{l+1}-z_i^l)\leq 0.
\end{align}
However, by applying the descent Lemma, one gets
\begin{align}
L_i^l(z_i^{l+1})\leq L_i^l(z_i^l)+{\nabla}L_i^l(z_i^l)^{\Trans}(z_i^{l+1}-z_i^l)+{\frac{C_i}{2}}\left\|z_i^{l+1}-z_i^l\right\|_2^2.
\end{align}
Combining this last inequality with inequality \eqref{eq:ineq_z_i_l}, one obtains
\begin{align}
L_i^l(z_i^{l+1})&\leq L_i^l(z_i^l)\nonumber\\
&+{\frac{1}{2}}(z_i^{l+1}-z_i^l)^{\Trans}((C_i-{\alpha_i^l})I_{n_i}-B_i^l)(z_i^{l+1}-z_i^l),
\end{align}
which, by Assumption \ref{ass:hss_approx}, implies that
\begin{align}
L_i^l(z_i^{l+1})+{\frac{\alpha_i^l}{2}}\left\|z_i^{l+1}-z_i^l\right\|_2^2 \leq L_i^l(z_i^l).
\end{align}
However, ${{\delta}_{\Zcal_i}(z_i^{l+1})={\delta}_{\Zcal_i}(z_i^l)=0}$. Thus,
\begin{align}
\Phi_i^l(z_i^{l+1})+{\frac{\alpha_i^l}{2}}\left\|z_i^{l+1}-z_i^l\right\|_2^2\leq\Phi_i^l(z_i^l).
\end{align}
\end{proof}
\begin{lem}[Relative error condition]
${{\forall}i\in\left\{1,\ldots,N\right\}}$,~${{\forall}l\in\Nset}$,
\begin{align}
&\exists v_i^{l+1}\in{\partial}{\Gamma}_i(z_i^{l+1}),\nonumber\\
&\left\|v_i^{l+1}+{\nabla}(Q+S)(z_1^{l+1},\ldots,z_i^{l+1},z_{i+1}^{l},{\ldots},z_N^l)\right\|_2\nonumber\\
&~~~~~~~~~~\leq b_i\left\|z_i^{l+1}-z_i^l\right\|_2,
\end{align}
where ${b_i:=3C_i+{\alpha}_i^+}$ and ${{\alpha}_i^+}$ is defined in \eqref{eq:def_reg_bnds}.
\end{lem}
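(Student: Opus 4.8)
The plan is to read off an explicit element of $\partial\Gamma_i(z_i^{l+1})$ from the first-order optimality conditions of the convex QP that defines $z_i^{l+1}$ in Algorithm~\ref{algo:inn_loop}, and then to bound its norm using the Hessian bound $C_i$ together with Assumption~\ref{ass:hss_approx}.

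First I would observe that the QP objective is strictly convex, since its Hessian is $B_i^l+\alpha_i^lI_{n_i}\succ C_iI_{n_i}\succ0$ by Assumption~\ref{ass:hss_approx} and positivity of $\alpha_i^l$. Hence $z_i^{l+1}\in\Zcal_i$ is its minimiser if and only if the inclusion
\begin{align}
0\in\nabla L_i^l(z_i^l)+\big(B_i^l+\alpha_i^lI_{n_i}\big)\big(z_i^{l+1}-z_i^l\big)+\Ncal_{\Zcal_i}(z_i^{l+1})\nonumber
\end{align}
holds, where I used $\partial\delta_{\Zcal_i}=\Ncal_{\Zcal_i}$ and the fact that $\nabla_{z_i}L_\rho$ evaluated at the partially updated point $(z_1^{l+1},\ldots,z_{i-1}^{l+1},z_i^l,z_{i+1}^l,\ldots,z_N^l)$ coincides with $\nabla L_i^l(z_i^l)$ by the definitions~\eqref{eq:use_defs}. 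Thus there is $w\in\Ncal_{\Zcal_i}(z_i^{l+1})$ with $w=-\nabla L_i^l(z_i^l)-\big(B_i^l+\alpha_i^lI_{n_i}\big)\big(z_i^{l+1}-z_i^l\big)$.

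Next, since $\Phi_i^l=L_i^l+\delta_{\Zcal_i}$ with $L_i^l$ smooth, $\nabla L_i^l(z_i^{l+1})+w\in\partial\Phi_i^l(z_i^{l+1})$; and since $\Gamma_i=\Phi_i^l-R_i^l$ with $R_i^l$ smooth, subtracting the smooth term yields $v_i^{l+1}:=\nabla L_i^l(z_i^{l+1})+w-\nabla R_i^l(z_i^{l+1})\in\partial\Gamma_i(z_i^{l+1})$. The crucial point is that $\nabla R_i^l(z_i^{l+1})$ is precisely the term $\nabla(Q+S)(z_1^{l+1},\ldots,z_i^{l+1},z_{i+1}^l,\ldots,z_N^l)$ appearing in the statement, so after substituting $w$ and cancelling this term,
\begin{align}
v_i^{l+1}+\nabla(Q+S)\big(z_1^{l+1},\ldots,z_i^{l+1},z_{i+1}^l,\ldots,z_N^l\big)=\nabla L_i^l(z_i^{l+1})-\nabla L_i^l(z_i^l)-\big(B_i^l+\alpha_i^lI_{n_i}\big)\big(z_i^{l+1}-z_i^l\big).\nonumber
\end{align}

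Finally I would take $\|\cdot\|_2$ and apply the triangle inequality. The gradient difference is bounded by $C_i\|z_i^{l+1}-z_i^l\|_2$ via the mean-value inequality along the segment $[z_i^l,z_i^{l+1}]\subset\Zcal_i$ and the bound $\|\nabla^2L_i^l\|_\infty^{\Zcal_i}\leq C_i$ established in the preceding lemma; the remaining term is bounded by $\|B_i^l+\alpha_i^lI_{n_i}\|_2\,\|z_i^{l+1}-z_i^l\|_2$, and $\|B_i^l+\alpha_i^lI_{n_i}\|_2\leq\|B_i^l\|_2+\alpha_i^l\leq2C_i+\alpha_i^+$ using $2C_iI_{n_i}-B_i^l\succ0$ (Assumption~\ref{ass:hss_approx}) and $\alpha_i^l\leq\alpha_i^+$ from~\eqref{eq:def_reg_bnds}. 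Summing the two contributions gives $b_i=3C_i+\alpha_i^+$, as claimed. I expect the main (minor) obstacle to be purely notational: matching $\nabla_{z_i}L_\rho$ at the partially updated iterate with $\nabla L_i^l(z_i^l)$ and $\nabla R_i^l(z_i^{l+1})$ with the displayed $\nabla(Q+S)$ term so that the cross terms cancel; once this bookkeeping is settled, the norm estimates are routine.
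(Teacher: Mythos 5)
Your proposal is correct and follows essentially the same route as the paper: write the QP optimality condition to extract $w_i^{l+1}\in\Ncal_{\Zcal_i}(z_i^{l+1})$, add and subtract $\nabla L_i^l(z_i^{l+1})$ to isolate $v_i^{l+1}\in\partial\Gamma_i(z_i^{l+1})$ alongside the $\nabla_i(Q+S)$ term, then bound the residual by $C_i\|z_i^{l+1}-z_i^l\|_2$ (descent/mean-value bound on $\nabla L_i^l$) plus $(\|B_i^l\|_2+\alpha_i^l)\|z_i^{l+1}-z_i^l\|_2\leq(2C_i+\alpha_i^+)\|z_i^{l+1}-z_i^l\|_2$ via Assumption~\ref{ass:hss_approx}. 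The only cosmetic difference is that you phrase the subdifferential bookkeeping as $\Gamma_i=\Phi_i^l-R_i^l$ whereas the paper expands $\nabla L_i^l(z_i^{l+1})$ into its constituent gradients; these are the same computation.
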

\begin{proof}
Writing the necessary optimality conditions for the ${i^{\text{th}}}$ QP of Algorithm \ref{algo:inn_loop}, one obtains
\begin{align}
0\in{\nabla}L_i^l(z_i^l)+\big(B_i^l+\alpha_i^lI_{n_i}\big)(z_i^{l+1}-z_i^l)+\Ncal_{\Zcal_i}(z_i^{l+1}).
\end{align}
Thus there exists ${w_i^{l+1}\in{\Ncal}_{\Zcal_i}(z_i^{l+1})}$ such that 
\begin{align}
0&=w_i^{l+1}+{\nabla}L_i^l(z_i^l)+(B_i^l+{\alpha}_i^lI_{n_i})(z_i^{l+1}-z_i^l)\nonumber\\
&=w_i^{l+1}+{\nabla}L_i^l(z_i^{l+1})+{\nabla}L_i^l(z_i^l)-{\nabla}L_i^l(z_i^{l+1})\nonumber\\
&~~~+(B_i^l+{\alpha}_i^lI_{n_i})(z_i^{l+1}-z_i^l).
\end{align}
The last equality yields
\begin{align}
w_i^{l+1}&+{\nabla}J_i(z_i^{l+1})+{\nabla}F_i(z_i^{l+1})^{\Trans}{\mu}\nonumber\\
&+{\rho}{\nabla}F_i(z_i^{l+1})^{\Trans}F_i(z_i^{l+1})\nonumber\\
&+{\nabla}_i(Q+S)(z_1^{l+1},{\ldots},z_{i}^{l+1},z_{i+1}^l,{\ldots},z_N^l)\nonumber\\
&={\nabla}L_i^l(z_i^{l+1})-{\nabla}L_i^l(z_i^l)+(B_i^l+{\alpha}_i^lI_{n_i})(z_i^l-z_i^{l+1}),
\end{align}
from which it immediately follows
\begin{align}
&\exists v_i^{l+1}\in{\partial}{\Gamma}_i(z_i^{l+1}),\nonumber\\
&v_i^{l+1}+{\nabla}_i(Q+S)(z_1^{l+1},{\ldots},z_{i-1}^{l+1},z_i^{l+1},z_{i+1}^l,{\ldots},z_N^l)\nonumber\\
&={\nabla}L_i^l(z_i^{l+1})-{\nabla}L_i^l(z_i^l)+(B_i^l+{\alpha}_i^lI_{n_i})(z_i^l-z_i^{l+1}).
\end{align}
However, as ${L_i^l\in C^2({\Zcal}_i,\Rset)}$, one gets
\begin{align}
&\left\|v_i^{l+1}+{\nabla}_i(Q+S)(z_1^{l+1},{\ldots},z_i^{l+1},z_{i+1}^l,\ldots,z_N^l)\right\|_2\nonumber\\
&~~~~\leq(C_i+\left\|B_i^l\right\|_2+\alpha_i^l)\left\|z_i^{l+1}-z_i^l\right\|_2\nonumber\\
&~~~~\leq(3C_i+{\alpha}_i^+)\left\|z_i^{l+1}-z_i^l\right\|_2.
\end{align}
\end{proof}
Now that the two main ingredients are proven, one can state the theorem guaranteeing convergence of the iterates of Algorithm \ref{algo:inn_loop} to a critical point of ${{\delta}_{\Zcal}+L_{\rho}(.,{\mu})}$.
\begin{theo}[Convergence of Algorithm \ref{algo:inn_loop}]
The sequence ${\big\{z^l\big\}}$ generated by Algorithm \ref{algo:inn_loop} converges to a point ${z^\ast}$ satisfying
\begin{align}
0\in{\nabla}L_{\rho}(z^\ast,{\mu})+{\Ncal}_{\Zcal}(z^\ast)
\end{align}
or equivalently ${d(0,{\nabla}L_{\rho}(z^\ast,{\mu})+{\Ncal}_{\Zcal}(z^\ast))=0}$.
\end{theo}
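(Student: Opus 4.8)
The plan is to apply Theorem~\ref{th:att_2013} to the function $f:=\delta_\Zcal+L_\rho(\cdot,\mu)$ (the scheme being understood to run indefinitely, i.e.\ with $\tau=0$), with the two lemmas just established playing the roles of its two structural hypotheses. First I would exhibit the block‑separable structure \eqref{eq:sep_struc}: in the notation of \eqref{eq:use_defs}, writing $S(z):={\mu}_G^{\Trans}G(z)+\tfrac{\rho}{2}\|G(z)\|_2^2$ for the constraint‑coupling part of the augmented Lagrangian, one has $f(z)=\sum_{i=1}^N\Gamma_i(z_i)+\big(Q(z)+S(z)\big)$, so the choice $f_i:=\Gamma_i$, $P:=Q+S$ realises \eqref{eq:sep_struc}. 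Each $\Gamma_i$ is proper and lower semicontinuous, being the sum of the continuous functions $J_i,\Psi_i$ and the indicator of the nonempty compact polytope $\Zcal_i$; $P$ is twice continuously differentiable by Assumption~\ref{ass:semi_alg}; and $f$ is bounded from below since it is lower semicontinuous with compact effective domain $\Zcal$.

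Next I would verify the KL property. By Assumption~\ref{ass:semi_alg} the maps $J_i,Q,F_i,G$ are semi‑algebraic; hence so are $\Psi_i$ and $S$, and, the indicator of a polytope being semi‑algebraic, so are the $\Gamma_i$ and therefore $f$, semi‑algebraicity being preserved under finite sums, products and compositions. Every semi‑algebraic function satisfies the KL property \cite{bolte2006}, so $f$ does. Boundedness of $\{z^l\}$ is immediate, since $z^l\in\Zcal$ for all $l$ and $\Zcal$ is compact.

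It then remains to match the sufficient decrease and relative error conditions \eqref{eq:suff_dec}–\eqref{eq:rel_err} with the two lemmas proved above. With $f_i=\Gamma_i$ and $P=Q+S$ one has $\Phi_i^l(z_i)=L_i^l(z_i)+\delta_{\Zcal_i}(z_i)=\Gamma_i(z_i)+P(z_1^{l+1},\ldots,z_i,\ldots,z_N^l)$, so the Sufficient decrease Lemma is precisely \eqref{eq:suff_dec}, and the Relative error condition Lemma (with $\nabla_i(Q+S)=\nabla_{z_i}P$ and $\partial\Gamma_i=\partial f_i$) is precisely \eqref{eq:rel_err} with $b_i:=3C_i+\alpha_i^+$; the uniform lower bound $\underline{\alpha}:=\min_{1\le i\le N}\alpha_i^->0$ required by Theorem~\ref{th:att_2013} is supplied by \eqref{eq:def_reg_bnds}. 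Theorem~\ref{th:att_2013} then gives that $\{z^l\}$ converges to a critical point $z^\ast$ of $f$, i.e.\ $0\in\partial\big(\delta_\Zcal+L_\rho(\cdot,\mu)\big)(z^\ast)$.

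Finally I would recast this inclusion in the stated form: since $L_\rho(\cdot,\mu)$ is continuously differentiable, the sum rule for subdifferentials yields $\partial\big(\delta_\Zcal+L_\rho(\cdot,\mu)\big)(z^\ast)=\nabla L_\rho(z^\ast,\mu)+\partial\delta_\Zcal(z^\ast)$, and $\partial\delta_\Zcal(z^\ast)=\Ncal_\Zcal(z^\ast)$ by the Sub‑differential of indicator function Lemma, so that $0\in\nabla L_\rho(z^\ast,\mu)+\Ncal_\Zcal(z^\ast)$; by the definition \eqref{eq:def_dis} of the distance function this is equivalent to $d(0,\nabla L_\rho(z^\ast,\mu)+\Ncal_\Zcal(z^\ast))=0$. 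The substantive estimates have already been carried out in the preceding lemmas, so I expect no serious obstacle here; the only points needing care are the verification of the KL property through semi‑algebraicity and the passage from the abstract critical‑point inclusion to the normal‑cone characterisation via the smooth‑plus‑nonsmooth sum rule, together with the bookkeeping of the identifications $f_i=\Gamma_i$ and $P=Q+S$ that align Algorithm~\ref{algo:inn_loop} with the abstract framework of \cite{att2013}.
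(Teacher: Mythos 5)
Your proposal is correct and follows essentially the same route as the paper: both invoke Theorem~\ref{th:att_2013} for $\delta_\Zcal+L_\rho(\cdot,\mu)$, using boundedness of the iterates via compactness of $\Zcal$, the KL property via semi-algebraicity, the two preceding lemmas for the sufficient decrease and relative error conditions, and $\underline{\alpha}=\min_i\alpha_i^-$. Your write-up is merely more explicit than the paper's about the identifications $f_i=\Gamma_i$, $P=Q+S$ and about converting the critical-point inclusion into the normal-cone form, which the paper leaves implicit.
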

\begin{proof}
The sequence ${\left\{z^l\right\}}$ is obviously bounded, as the iterates are constrained to stay in the polytope ${\Zcal}$. The function \({{\delta}_{\Zcal}+L_{\rho}(.,{\mu})}\) is bounded from below. Moreover, by Assumption \ref{ass:semi_alg}, \({{\delta}_{\Zcal}+L_{\rho}(.,{\mu})}\) satisfies the KL property for all ${\mu\in\Rset^m}$ and ${\rho>0}$. The sequence ${\left\{z^l\right\}}$ satisfies the conditions \eqref{eq:suff_dec} and \eqref{eq:rel_err}. The parameter $\underline{\alpha}$ of Theorem \ref{th:att_2013} can be taken as $\min\left\{\alpha_i^-\big|i\in\left\{1,\ldots,N\right\}\right\}$. Convergence to a critical point ${z^\ast}$ of ${\delta_{\Zcal}+L_{\rho}(.,\mu)}$ then follows by applying Theorem \ref{th:att_2013}.
\end{proof}
We can then guarantee that, given ${\epsilon>0}$, by taking a sufficiently large number of inner iterations, Algorithm \ref{algo:inn_loop} converges to a point ${\bar{z}\in\Zcal}$ satisfying ${d(0,{\nabla}L_{\rho}(\bar{z},\mu)+{\Ncal}_{\Zcal}(\bar{z}))\leq\epsilon}$, as needed by Algorithm \ref{alg:mult_loop}.
\section{Numerical example}
\label{sec:num_ex}
Our algorithm is tested on the following class of non-convex NLPs:
\begin{align}
\label{eq:toy_pb_1}
&\minimise_{x_1,\ldots,x_N}\sum_{i=1}^{N}x_i^{\Trans}H_ix_i+\sum_{i=1}^{N-1}x_i^{\Trans}H_{i,i+1}x_{i+1}\nonumber\\
&\text{s.t.}\nonumber\\
&\|x_i\|_2^2=a^2,~i\in\big\{1,{\ldots},N\big\}\nonumber\\
&-b\leq{}x_{i,j}\leq{}b, i\in\big\{1,\ldots,N\big\}, j\in\big\{1,\ldots,d\big\},
\end{align}
where ${N}$ is the number of agents and ${d}$ their dimension. Matrices ${H_i}$ and ${H_{i,i+1}}$ are indefinite. When applying Algorithm \ref{algo:inn_loop} to minimise the augmented Lagrangian associated with \eqref{eq:toy_pb_1}, the update of agent ${i}$ only depends on agents ${i{-}1}$ and ${i{+}1}$, which implies that, after appropriate re-ordering \cite{bert1997}, every inner iteration actually consists of two sequential update steps where ${N/{\displaystyle}2}$ QPs are solved in parallel. We fix ${d=3}$, ${N=20}$, ${a=\sqrt{R}}$ and ${b=0.6R}$ with ${R=2}$. Matrices ${H_i}$ and ${H_{i,i+1}}$ are randomly generated. For a fixed number of outer iterations, we gradually increase the number of inner iterations and count the number of random problems, on which the algorithm ends up within a fixed feasibility tolerance with respect to the nonlinear equality constraints ${\left\|x_i\right\|_2^2=a^2}$. Statistics are reported for ${500}$ random NLPs of the type \eqref{eq:toy_pb_1}. The initial penalty parameter is set to ${\rho_0=0.1}$ and the growth coefficient ${\beta=100}$. The hessian matrices of Algorithm \ref{algo:inn_loop} are set to ${30\rho I_d}$. The algorithm is initialised on random primal and dual initial guesses, feasible with respect to the inequality constraints. Results for a feasibility tolerances of ${10^{-3}}$, ${10^{-4}}$ and ${10^{-6}}$ are presented in Figure \ref{fig:prec_quant_3_6}.
\begin{figure}[t]
\begin{picture}(100,140)
\put(0,0){\includegraphics[width=0.5\textwidth]{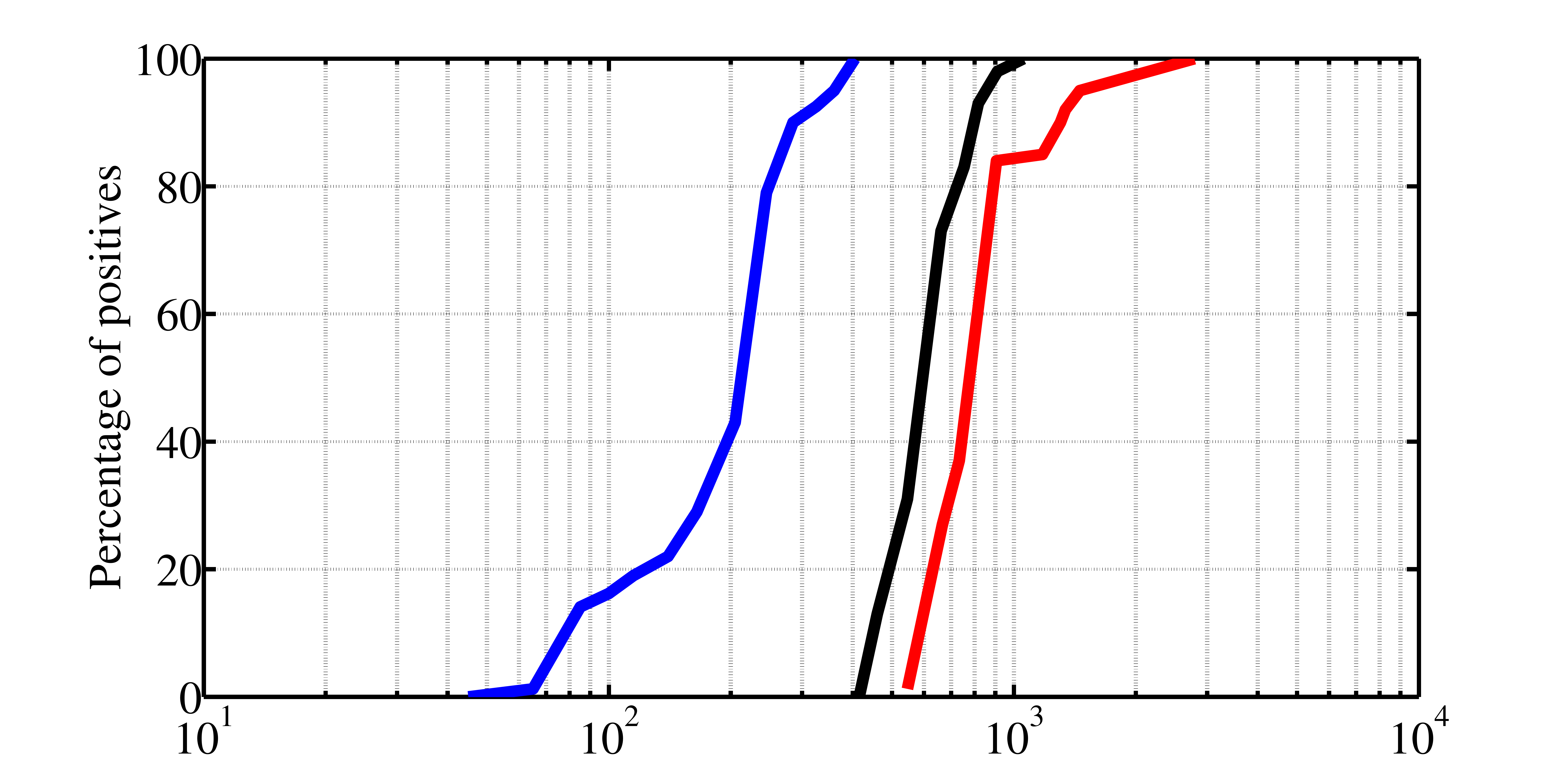}}
\put(90,-1){\footnotesize Total number of iterations}
\end{picture}
\caption{\label{fig:prec_quant_3_6}Percentage of positive problems for a fixed feasibility tolerance on the nonlinear equality constraints. Feasibility margins ${10^{-3}}$ in blue, ${10^{-4}}$ in black and ${10^{-6}}$ in red.}
\end{figure}
It clearly appears that obtaining an accurate feasibility margin requires a large number of iterations, yet a reasonable feasibility ($10^{-3}$) can be obtained with approximately hundred iterations. 
\bibliographystyle{plain}	
\bibliography{biblio}	

%
%
%
%
%
%
%
%

\end{document}